\documentclass[12pt]{amsart}

\usepackage{epstopdf}
\usepackage[caption=false]{subfig}

\usepackage[usenames]{color}
\input xypic

\usepackage[colorlinks=false, linkcolor=blue]{hyperref}
\usepackage{verbatim}
\usepackage{url}
\usepackage{bm}
\usepackage{tikz}
\usepackage{graphicx}
\usepackage{color}
\usepackage{nicematrix}
\usepackage{dsfont}
\usepackage{amsmath}
\usepackage{amsfonts}
\usepackage{amssymb}
\usepackage{amsthm}
\usepackage{newlfont}
\usepackage{url}
\usepackage{bm}
\usepackage{chngcntr}
\usepackage[utf8]{inputenc} 
\usepackage[T1]{fontenc}
\usepackage{listings}
\usepackage{rotating} 

\usepackage[a4paper,margin=2.5cm]{geometry}

\usepackage{algorithm}
\usepackage[noend]{algpseudocode}

\usepackage{tabularx}
\usepackage{xltabular}
\usepackage{array}
\usepackage{booktabs}
\usepackage{ragged2e}

\newcolumntype{Y}{>{\RaggedRight\arraybackslash}X}

\algblock{Input}{EndInput}
\algnotext{EndInput}
\algblock{Output}{EndOutput}
\algnotext{EndOutput}
\newcommand{\Desc}[2]{\State \makebox[2em][l]{#1} #2}

\usepackage[numbers,sort&compress]{natbib}
\bibpunct[, ]{[}{]}{,}{n}{,}{,}

\theoremstyle{plain}
\newtheorem{theorem}{Theorem}[section]
\newtheorem{lemma}[theorem]{Lemma}

\newtheorem{proposition}[theorem]{Proposition}
\newtheorem{conjecture}[theorem]{Conjecture}

\theoremstyle{definition}
\newtheorem{definition}[theorem]{Definition}
\newtheorem{example}[theorem]{Example}

\theoremstyle{remark}
\newtheorem{remark}{Remark}

\newcommand{\tm}{$\mathbf{t}$-}
\newcommand{\Fq}{\mathbb{F}_q}
\newcommand{\Ga}{\mathbb{G}_{a,K}}

\newcommand{\rk}{\mathrm{rk}}
\newcommand{\Mat}{\mathrm{Mat}}
\newcommand{\Lie}{\mathrm{Lie}}

\newcommand{\End}{\mathrm{End}}
\newcommand{\Ext}{\mathrm{Ext}}
\newcommand{\Der}{\mathrm{Der}}
\newcommand{\Derin}{\mathrm{Der_{in} }}

\newcommand{\Extt}{\Ext_\tau^1}

\newcommand{\podwzorem}[2]{\underbrace{#1}\limits_{#2}}

\begin{document}


\title{ Duality for \tm modules: The Difficult Cases}

\author{Dawid Edmund K{\k e}dzierski}

\address{ Institute of Mathematics, University of Szczecin,\\ ul. Wielkopolska 15, 70-451 Szczecin, Poland\\
}
\email{dawidedmundkedzierski@gmail.com}

\author{Piotr Kraso{\'n}}

\address{ Institute of Mathematics, University of Szczecin, \\ ul. Wielkopolska 15, 70-451 Szczecin, Poland\\
}
\email{piotrkras26@gmail.com }

\keywords{
Drinfeld modules; \tm modules; triangular \tm modules; 
duality of \tm modules; Weil-Bersotti formula; Cartier-Nishi theorem
}

\subjclass{Mathematics Subject Classification 2020: 11G09; 68W30; 11R58; 16S36; 11Y40}

\maketitle

\begin{abstract}
This paper continues our previous work on duality for Anderson t-modules. We study two-dimensional triangular \tm modules in a specific reduced form. Computer-assisted symbolic computations with matrices over a skew field revealed a reduction pattern for \tm modules satisfying the ALD condition. Using this pattern, we prove that such t-modules are isomorphic to their double duals and extend the validity of the Cartier–Nishi theorem and the Weil–Barsotti formula to a substantially broader class of t-modules.
\end{abstract}

\section{Introduction}

Drinfeld modules and their higher dimensional generalizations, since their invention, have been central objects of intensive studies in number theory.  In recent years, many algorithmic methods have been developed for the \tm modules  and the associated  \tm motives and \tm co-motives e.g.  \cite{kuhnpink}, \cite{Mau1}, \cite{arm}. This creates a natural environment for experimental mathematics. The present paper represents one of the first experimental approaches to problems in function field arithmetic over global function fields.
We address the concept of duality for \tm modules. The duality for Drinfeld modules, in terms of algebraic geometry, was investigated in \cite{ta}. In \cite{pr}, the authors defined duality for Drinfeld modules of rank at least two by means of the $\Ext^1_{\tau}$-functor and proved the Weil-Barsotti formula, as well as the Cartier-Nishi theorem \cite[Theorem 1.1]{pr}. In \cite{kk04}, we showed that, for a large class of  pairs of  \tm modules $\Phi$ and $\Psi$,  $\Ext^1(\Phi,\Psi)$ can be endowed with a natural structure of a \tm module. This was the starting point for our further investigation of the duality.   In \cite{gkk} we developed an algorithm, called the \tm reduction, which implements these  results. 
Computing endomorphism rings of Drinfeld modules is a  highly nontrivial problem (see \cite{kuhnpink} and \cite{gp}). For a \tm module ${\Upsilon}$ computing a double dual module ${\Upsilon^\vee}^\vee$ and finding that ${\Upsilon^\vee}^\vee\cong {\Upsilon}$
 is by no means easy.  Notice, that one cannot expect the Cartier-Nishi theorem and the Weil-Barsotti formula to hold for every \tm module $\Upsilon$ (see\cite[Example 4.1]{kk24}). In \cite{kk24} we proved these for $\Upsilon$ a non-nilpotent, stictly pure \tm module of degree at least two. Since the proof was  done in the language of biderivations it required careful  reductions by inner biderivations. In \cite{kk25} we introduced triangular \tm modules i.e. those with the composition series 
 having Drinfeld modules as  $\tau$-simple sub-quotients. For such modules allowing for low degree biderivations we were able to prove the Cartier-Nishi theorem and the Weil-Barsotti formula. In this paper, we deal with the most difficult cases where substantial coefficient swell occurs. This phenomenon constitutes a serious computational obstacle. To determine whether any patterns emerge in reductions leading to the isomorphism ${\Upsilon^\vee}^\vee\cong\Upsilon $, we performed a large number of computer experiments. We restricted our attention to triangular \tm modules of dimension two. These experiments enabled us to extend the aforementioned Cartier-Nishi theorem and Weil-Barsotti formula to  two-dimensional \tm modules  allowing for ${\mathrm{ALD}}$ biderivations. 
 The structure of the paper is as follows. In Section~\ref{prem}, we briefly review the necessary definitions. In Section~\ref{trian}, we introduce triangular modules, the main objects of study in this paper. We then formulate and discuss a general conjecture (cf. Conjecture~\ref{conjecture}).
 We then restrict our attention to two-dimensional triangular \tm modules. In particular, we emphasize that in the more difficult range
$\rk\psi_1 > \deg_\tau \delta \geq \rk\psi_2,$
the isomorphisms
$\Upsilon \cong {\Upsilon^{\vee}}^{\vee}$
are no longer given by constants; instead, they are realized by skew polynomials.
In Section~\ref{sec:computational_method}, we describe our computational methods in detail. In particular, we present the algorithms developed for this purpose.
Section~\ref{symb} is devoted to the complexity analysis of the computation of dual and double dual modules in the aforementioned difficult cases of two-dimensional triangular \tm modules.
In Section~\ref{poly}, we present the results of several computer-assisted experiments. Based on these results, we formulate a conjecture for two-dimensional triangular \tm modules and state the main theorem of this paper. In Section~\ref{sec:explicit-dual}, we compute the dual module explicitly (cf. Theorem~\ref{thm:postac_duala}) and prove an auxiliary proposition concerning the degree of a dual \tm module (cf. Proposition~\ref{prop:deg_dual_module}).
Sections~\ref{pfthm} and~\ref{pflemma} are devoted to the proofs of the main theorem and an auxiliary lemma, respectively.
Finally, in Section~\ref{disc}, we discuss our results and outline possible directions for future generalizations.

\section{Preliminaries}\label{prem}
In this section, we recall only the basic notions concerning \tm-modules, restricting ourselves to the algebraic side of the theory.  
For comprehensive introduction and further details, the reader is referred to the excellent sources \cite{g96}, \cite{bp20}, \cite{th04}, \cite{f13}.

Let $A=\Fq[t]$ be the polynomial ring over the finite field $\Fq$ with $q=p^{s}$ elements and let $K$ be a field of characteristic $p$ equipped with an $\Fq$–algebra homomorphism  $\iota:A\to K.$  We write $\theta := \iota(t)$.
The endomorphism ring $\End(\Ga)$, where $\Ga$ denotes the additive algebraic group over $K$, is the skew-polynomial ring $K\{\tau\}$.  
Here $\tau$ is the $q$–Frobenius endomorphism $x\mapsto x^{q}$, and therefore  the commutation relation
\[
\tau x = x^{q}\tau \qquad (x\in K)
\]
holds.
A definition of a $d$-dimensional \tm module was given by G.~Anderson \cite{a}.  It is an algebraic group $E$ defined over $K$ and isomorphic over $K$ to $\Ga^{d}$, together with a choice of an $\Fq$–linear endomorphism $l:E\to E$ such that  
$
\partial(l-\theta)^{n}\Lie(E)=0
$
for all sufficiently large $n$, where $\partial(-)$ denotes the differential of an endomorphism of algebraic groups.  
After fixing an isomorphism $E\cong\Ga^{d}$, the endomorphism $l$ determines an $\Fq$–algebra homomorphism
$\Phi:\Fq[t]\longrightarrow \Mat_{d}(K\{\tau\})$
such that the element $\Phi_{t}:=\Phi(t)$, regarded as a skew-polynomial in~$\tau$ with coefficients in $\Mat_d(K)$, has the form
\begin{equation}\label{eq:def:t-modul}
\Phi_{t} = (\theta I_d + N_{\Phi})\tau^{0} + M_{1}\tau + \dots + M_{r}\tau^{r},
\end{equation}
where $I_d$ is the identity matrix and $N_\Phi$ is a nilpotent matrix.
A \tm-module $\Phi$ is said to have \emph{no nilpotence} if $N_\Phi=0$ in \eqref{eq:def:t-modul}.  If the dimension $d=1,$ then a \tm module is called a Drinfeld module. Drinfeld modules were introduced in \cite{d74}.
For a general \tm module, there are well-defined notions of rank and degree; see \cite{bp20}. 
The category of \tm-modules is additive but not abelian; see \cite{kk04}.   Nevertheless,  there exists a well-defined notion of an exact sequence. Consequently, one can consider extension groups of pairs of \tm modules. The study of the such extension groups was initiated in \cite{pr} and continued in \cite{kk24,gkk,kk25}.

An extension of a \tm-module $\Phi:\Fq[t]\to\Mat_{d}(K\{\tau\})$ by a \tm-module  
$\Psi:\Fq[t]\to\Mat_{e}(K\{\tau\})$ can be represented by a {\emph biderivation}, that is, an $\mathbb{F}_q$-linear map 
$\delta:\Fq[t]\to \Mat_{e\times d}(K\{\tau\})$ satisfying
$
\delta(ab) = \Psi(a)\delta(b) + \delta(a)\Phi(b),$ for 
$
a,b\in\Fq[t].
$
The $\Fq$–vector space of all biderivations is denoted by $\Der(\Phi,\Psi)$. 
The map $\delta\mapsto\delta(t)$ induces an $\Fq$–linear isomorphism  
$
\Der(\Phi,\Psi) \;\cong\; \Mat_{e\times d}(K\{\tau\}).
$
Let  
$
\delta^{(-)}:\Mat_{e\times d}(K\{\tau\})\to\Der(\Phi,\Psi)
$
be the $\Fq$–linear map defined by
$\delta^{(U)}(a)=U\Phi_{a}-\Psi_{a}U$, for $a\in\Fq[t]$, $U\in\Mat_{e\times d}(K\{\tau\})$.
Its image, consisting of \emph{inner biderivations}, is denoted by $\Derin(\Phi,\Psi)$. Henceforth, we write $\delta^{(U)}_{\Phi,\Psi}$ to denote the element $\delta^{(U)}_{\Phi,\Psi}\in\Derin(\Phi,\Psi)$.
We have the $\Fq[t]$–module isomorphism 
$
\Ext^{1}_{\tau}(\Phi,\Psi)
\;\cong\;
\Der(\Phi,\Psi)/\Derin(\Phi,\Psi).
$
Recall that the structure of the $\Fq[t]$ -module in this quotient is given by
\begin{equation}\label{eq:wzor_na_mnozenie_w_Ext}
  a*\big(\delta+\Derin(\Phi,\Psi)\big)
:=\Psi_{a}\delta+\Derin(\Phi,\Psi).  
\end{equation}

For $V\in\Mat_{n_{1}\times n_{2}}(K\{\tau\})$ let $\partial V$ denote the constant term of~$V$ viewed as a matrix skew polynomial in~$\tau$.  
For \tm-modules $\Phi$ and $\Psi$ define
$\Der_{0}(\Phi,\Psi)
=
\{\delta\in\Der(\Phi,\Psi)\mid \partial\delta(t)=0\},
$
and
$
\Der_{0,\mathrm{in}}(\Phi,\Psi)
=
\{\delta\in\Derin(\Phi,\Psi)\mid \partial\delta(t)=0\}.$
 We then set
$
\Ext^{1}_{0,\tau}(\Phi,\Psi)
:=
\Der_{0}(\Phi,\Psi)/\Der_{0,\mathrm{in}}(\Phi,\Psi).
$
In general, $\Ext^{1}_{0,\tau}(\Phi,\Psi)$ is merely an $\Fq[t]$–module, although in certain situations this structure arises from a genuine \tm module; see \cite{pr}, \cite{kk04}, and \cite{gkk}.
For a \tm-module $\Phi$ we define its \emph{dual} by
$
\Phi^{\vee}
=
\operatorname{Ext}^{1}_{0,\tau}(\Phi,C),$
where $C$ denotes the Carlitz module, i.e., the Drinfeld module defined by
$ C_t=\theta+\tau.$

\section{Triangular \tm modules of dimension two}\label{trian}

The notion of a composition series for \tm-modules was adapted in \cite{gkk}; see \cite[Theorem~5.5]{gkk}.  
A \tm-module is called \emph{triangular} if it admits a composition series whose successive quotients are Drinfeld modules.  
The structure and properties of triangular \tm-modules were studied extensively in \cite{kk25}.  
In this article, we focus on triangular \tm-modules of dimension~$2$, namely those arising as the middle term of  a short exact sequence
$
0\longrightarrow \psi_{1}\longrightarrow \Upsilon\longrightarrow \psi_{2}\longrightarrow 0.
$
After a suitable choice of coordinates on $\Upsilon$, such a module acquires a triangular form
\begin{equation}\label{trojkatnymoduldim2}
\Upsilon
=\begin{pNiceArray}{cccccc}[margin=5pt]
          \Block[]{1-1}<>{\psi_2}&0 \\[2pt] 
         \Block[]{1-1}<>{\delta}& 
         \Block[]{1-1}<>{\psi_1}
\end{pNiceArray}.
\end{equation}
Thus, each such module is uniquely determined by a biderivation  
$\delta:\Fq[t]\to K\{\tau\}$, or equivalently by the skew polynomial $\delta(t)\in K\{\tau\}$. We denote the corresponding \tm module by  $\Upsilon=\Upsilon(\psi_1,\psi_2,\delta)$.

The main objective of this article is to investigate the following conjecture.
\begin{conjecture}\label{conjecture}
Let $\Upsilon$ be a triangular \tm-module.  
Then ${\Upsilon^{\vee}}^{\vee}\cong \Upsilon$, and moreover there exists an exact sequence of \tm-modules
\[
0\longrightarrow \Upsilon
\longrightarrow
\Ext^{1}_{\tau}(\Upsilon^{\vee},C)
\longrightarrow
\Ga^{s}
\longrightarrow 0,
\]
for some natural number $s$ depending on~$\Upsilon$.
\end{conjecture}

This conjecture is known to hold for Drinfeld modules of rank at least~$2$ \cite{pr}, for strictly pure \tm-modules of degree at least~$2$ without nilpotence  \cite{kk24}, and for triangular \tm-modules allowing for $\mathrm{LD}$–biderivations, without nilpotents and with quotient modules of rank at least~$2$ \cite{kk25}.  In the latter two settings, the conjecture fails once the assumption of non-nilpotence is dropped; see \cite[Example~4.1]{kk24} and \cite[Example~9.1]{kk25}. 

Motivated by these counterexamples, we restrict our attention to triangular \tm modules of the form \eqref{trojkatnymoduldim2} without nilpotence and with $\rk\psi_{i}\ge 2$, while allowing biderivations beyond the class of LD-biderivations.

Note that for triangular \tm-modules, \cite[Theorem~9.1]{kk25} implies that the condition  
${\Upsilon^{\vee}}^{\vee}\cong\Upsilon$ already yields the full conclusion of Conjecture~\ref{conjecture}.  
Moreover, the same theorem gives that
\begin{equation}\label{doubledualtrojkatnymoduldim2}
{\Upsilon^{\vee}}^{\vee}
=\begin{pNiceArray}{cc}[margin=5pt]
          \Block[]{1-1}<>{{\psi_2^\vee}^\vee}&0 \\[2pt] 
         \Block[]{1-1}<>{ {\delta^\vee}^\vee }& 
         \Block[]{1-1}<>{{\psi_1^\vee}^\vee}
\end{pNiceArray}.
\end{equation}

Let $r_{i} := \rk\psi_{i}$ for $i=1,2$, and let $a_{r_1}$ and $b_{r_{2}}$ denote the leading coefficients of $\psi_{1}$ and $\psi_{2}$, respectively. 
 Recall from the proof of \cite[Theorem~3.4]{kk24} that
\[
{\psi_{1}^{\vee}}^{\vee}
=
\frac{1}{a_{r_{1}}}\cdot \psi_{1}\cdot a_{r_{1}},
\qquad
{\psi_{2}^{\vee}}^{\vee}
=
\frac{1}{b_{r_{2}}}\cdot \psi_{2}\cdot b_{r_{2}}.
\]
 To recover the original subquotients in ${\Upsilon^{\vee}}^{\vee}$, we consider the conjugate of ${\Upsilon^{\vee}}^{\vee}$ by the diagonal matrix  
$D = \mathrm{diag}(b_{r_{2}},a_{r_{1}})$. 
Thus,
\[
{}^{D}{\Upsilon^{\vee}}^{\vee}
:=
D\,{\Upsilon^{\vee}}^{\vee}\,D^{-1}
=\begin{pNiceArray}{cc}[margin=5pt]
          \Block[]{1-1}<>{{\psi_2^\vee}^\vee}&0 \\[2pt] 
         \Block[]{1-1}<>{ a_{r_{1}}\cdot{\delta^{\vee}}^{\vee}\cdot\frac{1}{b_{r_{2}}} }& 
         \Block[]{1-1}<>{{\psi_1^\vee}^\vee}
\end{pNiceArray}.
\]

Therefore, the existence of an isomorphism $\Upsilon\cong{\Upsilon^{\vee}}^{\vee}$ is equivalent to the condition
\begin{equation}\label{eq:izo_warunek}
a_{r_{1}}\cdot{\delta^{\vee}}^{\vee}\cdot\frac{1}{b_{r_{2}}} - \delta
\;\in\;
\Derin(\psi_{2},\psi_{1}).
\end{equation}

It is worth noting that, in all previously studied cases, the isomorphism  
$\Upsilon\cong{\Upsilon^{\vee}}^{\vee}$ was realized by a matrix with entries in the field~$K$.  
 As we shall see in later examples lying  beyond the class of modules allowing for $\mathrm{LD}$–biderivations, this is no longer the case: the relevant isomorphisms are represented  by matrices whose entries are genuine skew polynomials rather than constants.  
This phenomenon substantially increases the complexity of the problem.

Observe  that, if $\rk\psi_1 \leq \rk\psi_2$ then the triangular \tm-module
$\Upsilon(\psi_1,\psi_2,\delta)$ allows for  $\mathrm{LD}$-biderivations (see
\cite[Corollaries 4.3 and 4.4]{kk25}). Consequently, our attention is restricted to the case  $\rk\psi_1 > \rk\psi_2$.
Furthermore, every triangular \tm module admits a reduced form [KK25, Theorem 4.1], for which
$\deg_\tau \delta < \max\{\rk\psi_1,\rk\psi_2\}$. Hence, throughout this paper we consider only examples satisfying
\begin{equation}
\label{eq:warunek_na_stopnie}
    \rk\psi_1 > \deg_\tau \delta \geq \rk\psi_2.
\end{equation}

\section{Computational method}\label{sec:computational_method}

The starting point of our approach is the recent work [GKK24], in which a \tm reduction algorithm was developed for determining the \tm module structure on the extension groups $\Ext^{1}_{\tau}(\Phi,\Psi),$ for suitable pairs of \tm modules $\Phi$ and $\Psi.$
In the present paper, we use the \tm reduction algorithm in two distinct ways. First, we apply it to compute the dual of a triangular \tm module $\Upsilon.$ The computation of $\Upsilon^\vee$ is obtained as a specialization of [GKK24, Algorithm 6] to the case $\Psi = C$, carried out only for biderivations whose differentials vanish. Second, we extend the \tm reduction algorithm to a new setting, namely the computation of the double dual biderivation ${\delta^\vee}^\vee$ in the case where the dual \tm module has an upper-triangular form.
\[
\Upsilon^{\vee}
=\begin{pNiceArray}{cc}[margin=5pt]
         \psi_{2}^{\vee} & \delta^{\vee} \\
  0               & \psi_{1}^{\vee}
\end{pNiceArray}.
\]

This case is substantially more delicate than those previously considered in \cite{gkk}.
In our pseudo-codes we use the following notation from \cite{gkk}:
\begin{itemize}
\item $x^{(k)}=x^{q^k}$
    \item $E_{i\times j}$ -- $n \times m$ matrix with 1 in i-th row and j-th column
 and $0$ everywhere else;
 \item $C=\theta+\tau$ -- Carlitz module;
    \item $\textsc{Deg(W)}$ -- degree of  a $\tau$- polynomial $W$;
\item $\textsc{Dim(F)}$ -- dimension of a t-module $F$ ;
\item $\textsc{Rows(M)}$ -- number of rows of a matrix $M$ 
\item $\textsc{Cols(M)}$ -- number of columns of a matrix $M$ 
\item $\textsc{Coefficient(w, n)}$ -- coefficient $a_{\textproc{n}}$ of a $K\{\tau\}$ polynomial $\textproc{w} = a_m {\tau}^m + ... + a_0$;
\item $\textproc{Substitute(expression, s, t)}$ -- 
replaces symbol $\textproc{s}$ in $\textproc{expression}$
 with symbol $\textproc{t}$ (sometimes we substitute for any expression, then $\textproc{(\_)}$ stands for any expression);
\item $\textproc{M}[i,j]$ - element $M_{i,j}$ of a matrix $\textproc{M}$; 
\item $\begin{bmatrix}
    \boldsymbol{V_{1}} & ... & \boldsymbol{V_{k}}
    \end{bmatrix}$ - matrix with columns $\boldsymbol{V_{i}}$
\item $\textsc{pMult}(u,v)$ -- computes the product of two skew polynomials $u, v\in K\{\tau\}$; see \cite[Algorithm 1]{gkk}.
\item $\textsc{TMult}(A,B)$ -- computes the product of matrices A and B with entries in $K\{\tau\}$; see \cite[Algorithm 1]{gkk}.
\item  $\textsc{Reduce2}(V,\Psi,\Psi)$ -- reduces the skew polynomials in the matrix $V$ using inner biderivations of the form $\delta_{\Phi,\Psi}^{(U)}$; as a result, the degree of the polynomial in the $i\times j$-entry is less than the degree of $\Phi[j,j]$; see \cite[Algorithm 5]{gkk}.
\end{itemize}

\begin{algorithm}[H]
\caption{}\label{alg:helper3}
\begin{algorithmic}[1]
\Function{CoefficientFormDual}{$\boldsymbol{V}, {d}$}

\Comment{lists all coefficients from degree one up to degree defined by matrix $d$ for each $\tau$ polynomial in $\boldsymbol{V}$ in order defined by columns }

\Input
\Desc{$\boldsymbol{V}$}{matrix of $\tau$ polynomials}
\Desc{$d$}{matrix of integers with the same dimensions as $\boldsymbol{V}$}
\EndInput
\Output
\Desc{$\boldsymbol{A}$}{column matrix of coefficients}
\EndOutput

\State $n \gets \Call{Rows}{\boldsymbol{V}}$
\State $m \gets \Call{Columns}{\boldsymbol{V}}$
\State $s \gets 1$

\For {$i = 1..n$}
    \For {$j = 1..m$}
        \For {$k = 1..d[i,j]-1$} 
            \State $\boldsymbol{A}[s, 1] \gets
            \Call{Coefficient}{{\boldsymbol{V}[i,j]}, {k}} $
            \State $s \gets s+1$
        \EndFor    
    \EndFor
\EndFor
\State \Return $\boldsymbol{A}$
\EndFunction
\end{algorithmic}
\end{algorithm}

\begin{algorithm}[H]
\caption{Computing dual of triangular \tm module}\label{alg:dual}
\begin{algorithmic}[1]

\Function{Dual}{$\Upsilon$}

\Input
\Desc{$\Upsilon$}{triangular \tm module without nilpotency and subquotients of rank $\geq2$}
\EndInput

\Output
\Desc{$\Pi$}{dual of $\Upsilon$}
\EndOutput

\State $n \gets \Call{Dim}{\Upsilon}$
\State $d$ : $n \times 1$ matrix

\For {$i = 1..n$}
\Comment{computing degrees of basis elements}
\State $d[i,1] \gets \Call{Deg}{\Upsilon[j,j]}$
\EndFor

\State $s \gets 1$
\For {$i = 1..n$}
\Comment{computing columns of $\Pi$}
        \State $r \gets \Call{Deg}{\Upsilon[j,j]}$
\For {$k = 2..r$}
            \State $\boldsymbol{V_{s}}
                \gets \Call{TMult}{C, {E_{i\times 1} c {\tau}^{k-1}}}$

       \algstore{myalg2}
\end{algorithmic}
\end{algorithm}
\begin{algorithm}
\begin{algorithmic}
\algrestore{myalg2}  
        
            \State $\boldsymbol{V_{s}} \gets
                \Call{Reduce2}
                {{\boldsymbol{V_{s}}}, {\Upsilon}, C}
            $
            \State $\boldsymbol{V_{s}} \gets
                \Call{CoefficientFormDual}{{\boldsymbol{V_{s}}}, {d}}$
            \State $\Call{Substitute}{\boldsymbol{V_{s}}, c^{(\_)} \to {\tau}^{(\_)}}$
            \State $s \gets s + 1$                 
        \EndFor
\EndFor
\State $\Pi \gets \begin{bmatrix}
    \boldsymbol{V_{1}} & ... & \boldsymbol{V_{s-1}} 
    \end{bmatrix}$
\State \Return $\Pi$

\EndFunction
\end{algorithmic}
\end{algorithm}

We now present the pseudocode for computing the double dual biderivation. Before doing so, let us recall the following facts.
\begin{enumerate}
\item By \cite[Lemma 3.2]{gkk}, the $\Fq[t]$–module $\Extt(\Upsilon^{\vee},C)$ inherits a natural \tm module structure arising from the \tm reduction algorithm.
\end{enumerate}

\begin{enumerate}
    \item By \cite[Lemma~3.2]{gkk}, the $\Fq[t]$–module $\Extt(\Upsilon^{\vee},C)$
inherits a natural \tm-module structure arising from the \tm reduction algorithm\footnote{%
Although the proof of Lemma~3.2 in \cite{gkk} is given for a lower-triangular block matrix, the argument carries over to our setting with only minor modifications.}.
\item By \cite[Lemma~9.2]{kk25}, one has an isomorphism
\begin{align*}\label{eq:doubledual-shape}
    {\Upsilon^\vee}^\vee \;&\cong\;
    \Big[
        \underbrace{\,0,\dots,0,\ K\tau\,}_{\rk\psi_2-1\ \textnormal{terms}}
        \;\Big|\;
        \underbrace{\,0,\dots,0,\ K\tau\,}_{\rk\psi_1-1\ \textnormal{terms}}
    \Big].   
\end{align*}
\item The matrix of $\delta^\vee$ has nonzero entries only in its last column; see \cite[Corollary 8.12]{kk25}.
\end{enumerate}
In addition, our algorithm makes use of the following inner biderivations from $\Derin(\psi,C)$, where $\psi=\theta+\sum_{i=1}^{r} a_i\tau^i$ is a Drinfeld module. To describe them conveniently, we introduce the following notation for row vectors in $\Mat_{1\times (r-1)}(K)$:
\begin{align*}
    V_{r}(\mu)
      &:= \big[\,
           \mu,\ \mu^{(1)},\ \dots,\ \mu^{(r-2)}
         \,\big], \\[0.2cm]
    W_{l,r}(\mu)
      &:= \big[
           \underbrace{0,\dots,0}_{l\ \textnormal{terms}},
           \ \mu,\ \mu^{(1)},\dots,\mu^{(r-l-1)}
         \big],
         \qquad l<r-1.
         \nonumber
\end{align*}
The matrices $V_r(\mu)$ and $W_{l,r}(\mu)$ give rise to the following two families of inner biderivations:
\begin{equation}\label{eq:biderywacje_wewnetrzne_cykl}
    \delta_{\psi^\vee,C}^{\big( V_{r}(\mu)\tau^k\big)}=
    \Big(\theta^{(k)}-\theta\Big)V_r(\mu)\tau^k+
    \Bigg[ 0,\cdots, 0, - \sum\limits_{j=1}^{r}\dfrac {a_{j}^{(k)}}{a_{r}^{(k)} } \mu^{(j-1)}\cdot\tau^{k+1} +  \dfrac{\mu}{a_{r}^{(k+1)} }\tau^{k+2} \Bigg]
    \end{equation}
    \begin{equation*}
     \delta_{\psi^\vee,C}^{\big( W_{l,r}(\mu)\tau^{k}\big)}=
      \Big(\theta^{(k)}-\theta\Big)W_{l,r}(\mu)\tau^k+
    \Bigg[ \podwzorem{0,\cdots,0}{(l-1)\textnormal{ terms}}, \mu\tau^{k+1},0,\cdots,0, - \sum\limits_{j =l+1}^{r}\dfrac {a_{j}^{(k)}}{a_{r}^{(k)} } \mu^{(j-l-1)}\tau^{k+1}\Bigg] \nonumber
\end{equation*}

\begin{algorithm}[H]\label{alg:doubledual}
\caption{Computing double dual biderivation of a triangular \tm module}
\begin{algorithmic}[1]

\Function{DoubleDualBiderivation}{$\Upsilon^\vee,\,r_1,\, r_2$}
\Input
\Desc{$\Upsilon^\vee$}{dual of \tm module $\Upsilon$}
\Desc{$r_1$}{rank of $\psi_1$}
\Desc{$r_2$}{rank of $\psi_2$}
\EndInput

\Output
\Desc{${\delta^\vee}^\vee$}{ double dual biderivation $(\delta^\vee)^\vee$ of $\Upsilon$}
\EndOutput

        \algstore{myalgdoubledual}
\end{algorithmic}
\end{algorithm}
\begin{algorithm}
\begin{algorithmic}
\algrestore{myalgdoubledual}

\State $v \gets \Call{TMult}{C,\; E_{1\times r_2-1}\, c\,\tau}$
\State $\mu\gets \Call{Coefficient}{v,2}\cdot \Call{Coefficient}{\psi_2,r_2}$
\State $G\gets \Call{TMult}{ \big[V_{r_2}(\mu)\mid \boldsymbol{0}\big],\Upsilon^\vee} - \Call{TMult}{C,\big[V_{r_2}(\mu)\mid \boldsymbol{0}\big]}$

\State $v\gets v - G$
\State $\mathrm{last}\gets r_1+r_2-2$
\For{$d=\Call{Deg}{v[\mathrm{last}]}, .., 2 $}
\State $\mu\gets \Call{Coefficient}{v[\mathrm{last}], d}\cdot  \Call{Coefficient}{\psi_1, r_1}^{(d-1)}$
\State $G\gets \Call{TMult}{ \big[\boldsymbol{0}\mid V_{r_1}(\mu)\tau^{d-2}\big],\Upsilon^\vee} - \Call{TMult}{C,\big[\boldsymbol{0} \mid V_{r_1}(\mu)\tau^{d-2}\big]}$ 
\State $v\gets v - G$
    \If{d>2}
 \For{$l=r_2, r_2+1,..,\mathrm{last}-1$}

            \State $\mu\gets \Call{Coefficient}{v[l],d-2}$
            \State $G\gets \Call{TMult}{ \big[\boldsymbol{0}\mid W_{l,r_1}(\mu)\tau^{d-3}\big],\Upsilon^\vee} - \Call{TMult}{C,\big[\boldsymbol{0} \mid W_{l,r_1}(\mu)\tau^{d-3}\big]}$ 
               \State $v\gets v - G$       
 
        \EndFor
    \EndIf
\EndFor

\State ${\delta^\vee}^\vee \gets \Call{Coefficient}{v[last],1}$

\State $\Call{Substitute}{ {\delta^\vee}^\vee,\; c^{(\_)} \to {\tau}^{(\_)} }$
\State \Return ${\delta^\vee}^\vee$
\EndFunction

\end{algorithmic}
\end{algorithm}

\begin{remark}
Let us note that, in the computation of the double dual biderivation, we encounter very large expressions $\mu$ (in lines 14 and 19 of the preceding algorithm), which are used to construct inner biderivations of the form \eqref{eq:biderywacje_wewnetrzne_cykl}. This requires particular care, since otherwise the computation of even a single such inner biderivation may become prohibitively time-consuming.
To improve efficiency, we adopted the following strategy. First, we determined the expression $G$ in lines 15 and 20 for an indeterminate $\mu$. We then grouped the resulting terms according to the powers $\mu^{(k)}.$ Finally, using the recurrence relation 
$
\mu^{(k+1)} = \bigl(\mu^{(k)}\bigr)^q
$
together with the additivity of the $q$-th power map in the field $K$, we computed these powers for the specific expression under consideration. This approach significantly accelerated the entire reduction process.
    \end{remark}

Finally, we present an algorithm for determining whether two biderivations $\delta_1$ and $\delta_2$ give rise to isomorphic triangular \tm modules
$
\Upsilon_1=\Upsilon(\psi_1,\psi_2,\delta_1)
\quad\text{and}\quad
\Upsilon_2=\Upsilon(\psi_1,\psi_2,\delta_2),
$
under the assumption that $\operatorname{rk}(\psi_1)>\operatorname{rk}(\psi_2)$.
The algorithm relies on the fact that $\Upsilon_1\cong\Upsilon_2$ if and only if there exists a skew polynomial $$u(\tau)\in K\{\tau\}$$ such that
$
\delta_1-\delta_2={\delta}^{u(\tau)}_{\psi_2,\psi_1}\in \Derin(\psi_2,\psi_1).
$
Moreover, whenever such a polynomial exists, the algorithm returns the corresponding polynomial $u(\tau)$.

\begin{algorithm}[H]
\caption{Computing polynomial $u(\tau)$ that induces an isomorphism   $\Upsilon_1=\Upsilon(\psi_1,\psi_2,\delta_1)\cong
\Upsilon_2=\Upsilon(\psi_1,\psi_2,\delta_2)$}

\begin{algorithmic}[1]
\Function{ComputeIzo}{$\psi_1,\psi_2,\delta_1,\delta_2$}
\Input
\Desc{$\psi_1,\psi_2$}{\quad Drinfeld modules}
\Desc{$\delta_1$}{\quad biderivation of $\Upsilon_1$}
\Desc{$\delta_2$}{\quad biderivation of $\Upsilon_2$}
\EndInput
\Output
 \,\,  a skew polynomial $u(\tau)$ such that $u(\tau)$ induces an isomorphism $\Upsilon_1 \cong \Upsilon_2$, if one exists; otherwise, $\mathbf{false}$.
\EndOutput
\State $r\gets \rk\psi_1$
\State $\delta\gets \delta_1-\delta_2$
\State $u\gets 0$

        \algstore{myalg4}
\end{algorithmic}
\end{algorithm}
\begin{algorithm}
\begin{algorithmic}
\algrestore{myalg4}   

\While{$\Call{Deg}{\delta}\geq r$}
    \State $d\gets \Call{Deg}{\delta}$
  
    \State $\mu\gets -\dfrac{\Call{Coefficient}{\delta, d}}{ \Call{Coefficient}{\psi_1,\rk\psi_1  } }$

    \State $G\gets \Call{pMult}{\mu \tau^{d-r},\psi_2 }-\Call{pMult}{\psi_1,\mu\tau^{d-r}}$
    \State $u\gets u + \mu \tau^{d-r}$
    \State $\delta\gets \delta - G$
\EndWhile
\If{$\delta = 0$}
    \State \Return $u$
\Else
    \State \Return $\boldsymbol{false}$
\EndIf   
\EndFunction
\end{algorithmic}
\end{algorithm}
All algorithms described in this chapter were implemented in Wolfram Mathematica 13.1.

\section{Symbolic complexity of dual and double dual computations}\label{symb}

The explicit computation of duals and double duals of \tm modules requires working with concrete elements of the twisted polynomial ring $K\{\tau\}$. Although the algebraic structure of these objects is well understood at an abstract level, their explicit representations often involve symbolic expressions whose size and complexity grow rapidly during computation. This phenomenon is closely related to the well-known problem of expression swell in symbolic computation; see \cite{vdH22}.
In the context of Drinfeld modules and \tm modules, similar computational challenges have been observed in explicit calculations over $K\{\tau\}$; see \cite{pr,gkk,kk25}.  These works indicate that, even when degrees and ranks can be controlled theoretically, the resulting coefficients may become prohibitively large from a computational perspective.
In this section, we investigate this phenomenon experimentally for triangular \tm modules of dimension two. We first study the dual construction and then the double dual, demonstrating that symbolic complexity increases significantly at each stage. Our experiments show that computer-assisted methods become indispensable even for examples of moderate size.

\subsection{Growth of symbolic expressions in the dual \tm module and its mitigation.}
Let       $\Upsilon=\Upsilon(\psi_1,\psi_2,\delta_1)$ be a triangular \tm-module of
dimension two without nilpotence. 
In this case, the dual biderivation $\delta^\vee$ is represented by a matrix
whose only nonzero entries occur in the last column.

From a theoretical perspective, the $\tau$-degree of $\delta^\vee$ can be
controlled explicitly. In particular, we obtain the following result.

\begin{proposition}\label{prop:deg_dual_module}
    Let $\Upsilon=\Upsilon(\psi_1,\psi_2,\delta)$ be a triangular \tm-module of
    dimension $2$ without nilpotence, where $\rk\psi_1>\rk\psi_2>1$. Then
    \[
        \deg_\tau\Upsilon^\vee
        =
        \max\left\{
            2,\;
            \left\lfloor
            \frac{\deg_\tau\delta-1}{\rk\psi_2-1}
            \right\rfloor+1
        \right\}.
    \]
\end{proposition}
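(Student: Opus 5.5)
The plan is to compute $\Upsilon^\vee=\Ext^1_{0,\tau}(\Upsilon,C)$ through the reduction procedure of Algorithm~\ref{alg:dual} and to keep track of two quantities: the $\tau$-degree of each coordinate, and the highest Frobenius twist $c^{(j)}$ that occurs in its coefficients. After the substitution $c^{(j)}\mapsto\tau^j$, the $\tau$-degree of $\Upsilon^\vee$ is exactly the largest twist $j$ that survives among the coefficients of degrees $1,\dots,r_i-1$ in the reduced representatives of $t*[\,\cdot\,]$.

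I write an element of $\Der_0(\Upsilon,C)$ as a row $[x,y]$, where $x$ corresponds to the $\psi_2$-column and $y$ to the $\psi_1$-column. For $U=[u',u]$ the inner biderivation is
\[
\delta^{(U)}_{\Upsilon,C}(t)=\big[\,u'\psi_2+u\delta-Cu',\; u\psi_1-Cu\,\big].
\]
The diagonal contributions are treated first. Reducing $C\cdot[cE\tau^k]$ with $U$ supported in a single column is the Drinfeld-module computation, which gives $\psi_i^\vee$. By formula \eqref{eq:biderywacje_wewnetrzne_cykl}, these contributions involve twists of $c$ of order at most $2$, and order $2$ is attained. This accounts for the term $2$ in the maximum.

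The only new twists come from the last basis vector $[0,c\tau^{r_1-1}]$. Here $C\cdot[0,c\tau^{r_1-1}]$ has $y$-degree $r_1$. Killing it requires $u=\mu\tau^0$ with $\mu$ a nonzero multiple of $c^{(1)}$, and this produces $x=-\mu\delta$ (up to sign conventions). Thus $x$ has degree $D:=\deg_\tau\delta$ and leading coefficient $\mu\,\delta_D$, which carries the twist $c^{(1)}$.

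Next I reduce $x$ by $u'=\mu'\tau^{D'-r_2}$, where $D'$ is the current degree of $x$. The term $u'\psi_2$ cancels the top coefficient without raising twists. The term $-Cu'$ contributes $\mu'^{(1)}\tau^{D'-r_2+1}$. Each step therefore lowers the degree by exactly $r_2-1$ and raises the maximal twist by exactly $1$. I will prove by induction the following invariant: after $s'$ steps, every coefficient of $x$ has twist at most $1+s'$, and twist $1+s'$ occurs only in the top coefficient, where it appears with a nonzero factor. This factor is built from $\delta_D$, powers of $b_{r_2}^{-1}$, $a_{r_1}^{-1}$ and their Frobenius twists. Since the top twist sits only in the leading coefficient, it cannot be cancelled by contributions from lower terms.

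The process stops at the first $s$ with $D-s(r_2-1)\le r_2-1$, which is
\[
s=\left\lfloor\frac{D-1}{r_2-1}\right\rfloor .
\]
At that point the surviving coefficient has degree $D-s(r_2-1)\in[1,r_2-1]$, so it is recorded by \textsc{CoefficientFormDual}, and it carries the twist $1+s$. The degenerate case $D<r_2$ gives $s=0$ and twist $1$, which the maximum with $2$ absorbs.

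The main obstacle is the non-cancellation claim. I must check that the lower-degree terms of $u'\psi_2$ and $\theta\mu'$ never reintroduce the top twist. I must also check that other basis vectors, and the $W$-type corrections on the $\psi_1$-side, cannot produce a higher twist or cancel it. For this I will argue via a grading by the twist index: every operation other than $-Cu'$ preserves the twist index. I will then compare the surviving twist $1+s$ with the bound $2$ coming from the $y$-reduction.
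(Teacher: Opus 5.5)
\textbf{Gap in the inductive invariant.} Your overall route is the paper's. You pass to the basis vector $[0,c\tau^{r_1-1}]$, kill its $y$-coordinate with $[0,\mu\tau^0]$, $\mu=c^{(1)}/a_{r_1}$, and then reduce $x=-\mu\delta$ by inner biderivations $[\mu'\tau^{l},0]$ while tracking the Frobenius twist of $c$. The gap is that the invariant you plan to prove by induction is false in general.

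Subtracting $\delta^{([\mu'\tau^{D'-r_2},0])}_{\Upsilon,C}$ kills the top coefficient. It also leaves in $x$ the terms $\mu'b_j^{(D'-r_2)}\tau^{D'-r_2+j}$, $1\le j\le r_2-1$, and $\mu'(\theta^{(D'-r_2)}-\theta)\tau^{D'-r_2}$, all carrying the \emph{same} twist as $\mu'$. Only the term coming from $-Cu'$ raises the twist, and it lands at degree $D'-r_2+1$.

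So suppose $r_2\ge 3$ and some $b_j\neq 0$ with $2\le j\le r_2-1$, or $\delta$ has further terms of degree strictly between $D-r_2+1$ and $D$, where $D=\deg_\tau\delta$ is your $D$. Then:
\begin{itemize}
\item the degree of $x$ drops by less than $r_2-1$;
\item the new top coefficient carries the \emph{old} twist;
\item the higher twist sits strictly below the top.
\end{itemize}
Your picture is accurate only for $r_2=2$. Hence ``twist $1+s'$ occurs only in the top coefficient'' fails after the first step. Your non-cancellation argument rests exactly on this claim, so it has no basis.

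Your count of $s$ steps also ignores that the leftover lower-twist terms must themselves be reduced. They spawn their own twist increments, and you never control these. This is why the paper organizes the reduction by twist level. It first clears all $c^{(1)}$-terms via $(l,c^{(1)}\xi_{1,l})$, then all $c^{(2)}$-terms, and so on, using that $c^{(k)}$ never appears above degree $h-(k-1)(r_2-1)$.

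\textbf{Repair: a weight invariant.} What is missing is an invariant combining degree and twist. Use the weight $w(c^{(i)}\tau^j)=j+(r_2-1)i$.
\begin{itemize}
\item Since $\mu\mapsto\mu^{(1)}$ is additive, the reductions act monomial by monomial.
\item Reducing $\beta c^{(i)}\tau^{D'}$ with $D'\ge r_2$ produces monomials $c^{(i)}\tau^{j'}$, $D'-r_2\le j'<D'$, all of strictly smaller weight. It also produces the single monomial $c^{(i+1)}\tau^{D'-r_2+1}$, which has the same weight.
\item The start $x=-\mu\delta$ has maximal weight $D+r_2-1$, attained only at its leading monomial.
\end{itemize}
Consequently every surviving $c^{(i)}\tau^j$ with $1\le j\le r_2-1$ satisfies $i\le 1+(D-j)/(r_2-1)$. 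This gives $i\le 1+q$ with $q=\lfloor (D-1)/(r_2-1)\rfloor$.

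For the lower bound, the monomial $c^{(1+q)}\tau^{D-q(r_2-1)}$ has maximal weight. So it arises only from the forced chain of $q$ twist increments starting at the leading monomial. Its coefficient is a single product of twists of $\delta_D/a_{r_1}$ and $1/b_{r_2}$; it equals $-d^{(q)}$ in the monic case $\delta=d\tau^h$. Hence it is nonzero.

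With this replacement your argument works. It even handles arbitrary $\delta$ and the case $D<r_2$ directly, without the paper's reduction to monomials or its appeal to \cite[Proposition 8.11]{kk25}.

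\textbf{Minor points.} The diagonal degree $2$ comes from reducing $c^{(1)}\tau^{r_i}$ by $[\mu\tau^0]$ in the dual itself: the term $-C\mu$ yields $\mu^{(1)}\tau$. It does not come from \eqref{eq:biderywacje_wewnetrzne_cykl}, which describes the inner biderivations used for the double dual. For the same reason, no $W$-type corrections occur here.
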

Thus, the $\tau$-degree of the dual \tm module is determined solely by the rank of $\psi_2$	
 and the $\tau$-degree of the biderivation $\delta$. In particular, its growth is linear in $\deg_{\tau}{\delta} ,$  providing an explicit theoretical bound despite the potentially large symbolic expressions arising in the computation of $\Upsilon^{\vee}.$ 

  The proof of Proposition \ref{prop:deg_dual_module}5 is given in Section~\ref{sec:explicit-dual}. Although the proposition provides explicit control over the $\tau$-degree of the dual biderivation, this control does not reflect the full computational complexity of the dual \tm\ module. In practice, the coefficients of ${\delta}^{\vee}$ 
quickly grow into large algebraic expressions, leading to significant expression swell.

To quantify this effect, we measure the symbolic size of ${\delta}^{\vee}$
  using two standard indicators: the {\bf leaf count}, which records the number of nodes in the corresponding expression trees, and the {\bf memory footprint}, measured by the number of bytes required to store the resulting expressions. Since ${\delta}^{\vee}$ 
has only one nonzero column (see \cite[Corollary 8.12]{kk25}), both quantities are obtained by summing the corresponding values over the entries of that column.
Table~1 illustrates the rapid growth of symbolic complexity for a collection of examples with increasing input degrees. Even for moderately sized parameters, the resulting expressions become too large for practical hand computations, highlighting the necessity of computer-assisted methods.
\begin{table}[htbp]
    \begin{tabularx}{\textwidth}{c@{\hspace{1.2em}}c@{\hspace{1.2em}}c@{\hspace{1.2em}}c}
    \label{tab:dual-growth}\\
\toprule
$(\rk\psi_1,\rk\psi_2,\deg\delta_1)$ &
$\deg_\tau(\delta^\vee)$ &
LeafCount$(\delta^\vee)$ &
ByteCount$(\delta^\vee)$ \\
\midrule
(3,2,2) & 2 & 77 & 2600 \\   
(4,2,3) & 3 & 224 & 7560 \\
(5,2,4) & 4 & 668 & 22312 \\
(6,2,5) & 5 & 2345 & 77176 \\
(7,2,6) & 6 & 7664 & 249696 \\
(8,2,7) & 7 & 24972 & 807968 \\
(9,2,8) & 8 & 78210 & 2517136 \\
(10,2,9) & 9 & 243268 & 7797176 \\
\midrule
(4,3,3) & 2 & 110 & 3752 \\
(5,3,4) & 2 & 266 & 9048 \\
(6,3,5) & 3 & 687 & 23232 \\
(7,3,6) & 3 & 1788 & 59856 \\
(8,3,7) & 4 & 4833 & 159968 \\
(9,3,8) & 4 & 12751 & 418248 \\
(10,3,9) & 5 & 32877 & 1070504 \\
(11,3,10) & 5 & 83943 & 2717088 \\
(12,3,11) & 6 & 211546 & 6813872 \\
\bottomrule
\end{tabularx}
\caption{Growth of symbolic complexity in the computation of  $\delta^\vee$.}
\end{table}

To mitigate the growth of symbolic complexity, we make use of two algebraic observations. First, \cite[Corollary 8.8]{kk25} shows that the map $\delta\mapsto\delta^\vee$ is  $\Fq$-linear. It therefore suffices to carry out both the computations and the proofs for biderivations of the form $d\tau^h$. 

Next, suppose that the leading coefficients of 
$\psi_1$ and $\psi_2$	
  are not equal to $1.$ Then the coefficients of $\delta^\vee$
  are generally rational expressions whose denominators contain powers of these leading coefficients. The following proposition shows that, without loss of generality, one may reduce to the case in which the leading coefficients are equal to $1.$

\begin{proposition}\label{prop:monic_drinfeld}
Let $\Upsilon=\Upsilon(\psi_1,\psi_2,\delta)$ be a triangular \tm-module defined
over a field $K$. Then there exists a finite field extension $K(\Upsilon)$ of $K$
such that both $\psi_1$ and $\psi_2$ become monic.
\end{proposition}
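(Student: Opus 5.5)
The idea is to make both Drinfeld modules monic by conjugating with a diagonal constant matrix over a suitable finite extension. For a Drinfeld module $\psi$ and $c\in \bar K^{\times}$, consider the conjugate $c^{-1}\psi c$. If $\psi_t=\theta+\sum_{i=1}^{r}a_i\tau^i$, the relation $\tau^i c = c^{(i)}\tau^i$ gives
\[
c^{-1}\psi_t c=\theta+\sum_{i=1}^{r} a_i c^{(i)-1}\tau^i .
\]
So the leading coefficient becomes $a_r c^{q^r-1}$. To make it equal to $1$ we need $c$ with $c^{q^r-1}=a_r^{-1}$. Such a $c$ is a root of the separable polynomial $X^{q^r-1}-a_r^{-1}$, whose derivative is nonzero at nonzero points because $q^r-1$ is prime to $p$. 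Hence $c$ lies in a finite separable extension of $K$ of degree at most $q^r-1$.

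First, choose $c_1$ with $c_1^{q^{r_1}-1}=a_{r_1}^{-1}$ and $c_2$ with $c_2^{q^{r_2}-1}=b_{r_2}^{-1}$, where $r_i=\rk\psi_i$ and $a_{r_1},b_{r_2}$ are the leading coefficients of $\psi_1,\psi_2$. Set $K(\Upsilon):=K(c_1,c_2)$. This is a finite extension of $K$, being generated by two algebraic elements.

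Second, conjugate $\Upsilon$ by the diagonal matrix $D'=\mathrm{diag}(c_2,c_1)$, exactly as with $D$ in Section~\ref{trian}. This gives
\[
D'^{-1}\,\Upsilon\, D'=\begin{pNiceArray}{cc}[margin=5pt]
c_2^{-1}\psi_2 c_2 & 0\\
c_1^{-1}\delta\, c_2 & c_1^{-1}\psi_1 c_1
\end{pNiceArray},
\]
which is again a triangular \tm module of the form \eqref{trojkatnymoduldim2}, now with monic subquotients. Conjugation by a constant invertible diagonal matrix preserves several features:
\begin{itemize}
\item the constant term $\theta I_2$, so there is still no nilpotence;
\item the ranks of $\psi_1,\psi_2$;
\item the $\tau$-degree of $\delta$, so condition \eqref{eq:warunek_na_stopnie} is unchanged.
\end{itemize}
The new module is therefore isomorphic to $\Upsilon$ over $K(\Upsilon)$, via the \tm module isomorphism $D'$.

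The only point needing care is the meaning of "become monic". I would state explicitly that it means "isomorphic over $K(\Upsilon)$ to a triangular \tm module $\Upsilon(\psi_1',\psi_2',\delta')$ with $\psi_i'$ monic". I would also remark that duality commutes with this base change and conjugation, since the constructions of $\Ext^1_{0,\tau}(-,C)$ via biderivations are compatible with extension of scalars. This justifies the reduction used in the subsequent computations; the existence of the roots $c_i$ is routine.
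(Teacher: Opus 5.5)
Your proof is correct and is essentially the paper's argument: the paper takes roots $\mu,\xi$ of $x^{q^{r_1}}-a_{r_1}x$ and $x^{q^{r_2}}-b_{r_2}x$ and conjugates to $D\Upsilon D^{-1}$ with $D$ diagonal, which is your construction with $c_1=\mu^{-1}$ and $c_2=\xi^{-1}$. Your version is slightly more careful than the paper's: you force the roots to be nonzero, and your $\mathrm{diag}(c_2,c_1)$ puts the entries in the order that matches the positions of $\psi_2$ and $\psi_1$ in $\Upsilon$.
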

\begin{proof}
Let $a_{r_1}$ and $b_{r_2}$ denote the leading coefficients of $\psi_1$ and
$\psi_2$, respectively, where $r_i=\rk\psi_i$ for $i=1,2$. Let $\mu$ and $\xi$
be roots of the polynomials
$
x^{q^{r_1}}-a_{r_1}x
\,\,\text{and}\,\,
x^{q^{r_2}}-b_{r_2}x,
$
respectively. Define
$
D=\operatorname{diag}(\mu,\xi).
$
Then the \tm-module $D\Upsilon D^{-1}$ is defined over
$
K(\Upsilon):=K(\mu,\xi),
$
and its diagonal entries are monic.
\end{proof}
If both $\psi_1$ and $\psi_2$ are monic, then the triangular \tm module
$\Upsilon=\Upsilon(\psi_1,\psi_2,\delta)$ will be called \emph{monic}.
Consequently, after a finite extension of the base field $K$, we may assume
without loss of generality that $\Upsilon$ is monic.

We emphasize that this assumption substantially simplifies the computations.
Indeed, in the monic case, the coefficients of the dual $t$-module are
polynomial expressions, whereas in general they are rational expressions
whose denominators involve powers of the leading coefficients of $\psi_1$
and $\psi_2$.

\subsection{Symbolic expression growth in the double dual and its mitigation}
We now turn to the double dual construction. In contrast to the dual case, the
double dual ${\delta^\vee}^\vee$ is represented by a single twisted polynomial,
that is, by a $1\times 1$ matrix. Consequently, its symbolic complexity can be
measured directly on this polynomial using the same indicators as above.

Despite this apparent simplification in form, the symbolic size of
${\delta^\vee}^\vee$ typically grows even more rapidly. Although its
$\tau$-degree remains controlled, its coefficients involve substantially larger
algebraic sub-expressions, reflecting the cumulative effect of successive dual
constructions in $K\{\tau\}$.

Table~\ref{tab:double-dual-growth} summarises this behaviour for the family
of examples, with the additional assumption that all triangular \tm-modules
under consideration are monic. The observed growth clearly demonstrates that
the computation of double dual modules lies well beyond the scope of feasible
manual calculations and further highlights the necessity of computer-assisted
experimentation.
\begin{table}[htbp]
    \begin{tabularx}{\textwidth}{c@{\hspace{1.2em}}c@{\hspace{1.2em}}c@{\hspace{1.2em}}c}
    \label{tab:double-dual-growth}\\
\toprule
$(\rk\psi_1,\rk\psi_2,\deg\delta_1)$ &
$\deg_\tau\big({\delta^\vee}^\vee\big)$ &
LeafCount$\big({\delta^\vee}^\vee\big)$ &
ByteCount$\big({\delta^\vee}^\vee\big)$ \\
\midrule
(3,2,2) & 3 & 41 & 1408 \\
(4,2,3) & 7 & 476 & 16224 \\
(5,2,4) & 13 & 6749 & 226544 \\
(6,2,5) & 21 & 114389 & 3810120 \\
(7,2,6) & 31 & 2167524 & 71894768 \\
(8,2,7) & 43 & 46453246 & 1536367808 \\
\midrule
(4,3,3) & 4 & 63 & 2192 \\
(5,3,4) & 6 & 237 & 8192 \\
(6,3,5) & 11 & 1460 & 49856 \\
(7,3,6) & 14 & 6634 & 224664 \\
(8,3,7) & 22 & 41708 & 1404768 \\
(9,3,8) & 26 & 221505 & 7421632 \\
(10,3,9) & 37 & 1429731 & 47796176\\
(11,3,10) & 42 & 8306737 & 276785784 \\
\bottomrule
\end{tabularx}
\caption{Growth of symbolic complexity in the double dual ${\delta^\vee}^\vee$.}
\end{table}

To mitigate symbolic expression growth in the computation of the double dual
$t$-module, we make use of the fact that the map
$\delta^\vee \longmapsto {\delta^\vee}^\vee$
is $\mathbb{F}_q$-linear; see \cite[Corollary~8.8]{kk25}. It therefore suffices
to carry out our computations and proofs for biderivations of the form
$d\tau^h$.

In addition, we represent the dual biderivation as the sum
\eqref{eq:postac_duala} and perform the computations separately for each of its
summands. This substantially reduces the growth of intermediate symbolic
expressions.

\section{The polynomial $u(\tau)$ defining the  isomorphism    ${\Upsilon^\vee}^\vee\cong \Upsilon$. }\label{poly}
In this chapter, we present the conclusions drawn from the computations concerning the form of the polynomial $u(\tau)$ that determines the isomorphism between ${\Upsilon^\vee}^\vee$ and $\Upsilon$. Throughout our computations, we assume the following explicit forms of $\psi_1$, $\psi_2$, and $\delta$:
\[
\psi_1=\theta+\sum_{i=1}^{r_1-1} a_i\tau^i+\tau^{r_1}, \qquad
\psi_2=\theta+\sum_{j=1}^{r_2-1} b_j\tau^j+\tau^{r_2}, \qquad
\textnormal{and} \qquad
\delta=d\tau^h.
\]
Each example is indexed by a triple  $(r_1,r_2,h)$ satisfying $r_1>h\geq r_2.$ We denote by $u(r_1,r_2,h)$ the polynomial $u(\tau)$ associated with the triple $(r_1,r_2,h)$.

 We fix $r_2\in\mathbb{N}$ and organize the polynomials $u(r_1,r_2,h),$ with $r_1>h\geq r_2,$  into classes indexed by pairs $(r_1,r_2)$ satisfying $r_1>r_2.$ For each such pair, we define
\[
\mathcal{T}_{r_1,r_2}
=
\Big\{u(r_1,r_2,h):\ h\in\{r_2,r_2+1,\dots, r_1-1\} \Big\}.
\]
For a fixed $r_2,$ these classes are considered successively in increasing order of $r_1.$ By comparing the elements of two consecutive classes $\mathcal{T}_{r_1,r_2}$ and $\mathcal{T}_{r_1+1,r_2},$ 	
 we observed the following relationships:
\begin{itemize}
    \item[\textbf{a.}] In two consecutive classes $\mathcal{T}_{r_1,r_2}$ and $\mathcal{T}_{r_1+1,r_2}$, the first $r_2-1$ polynomials coincide, i.e.,
    \[
    u(r_1,r_2,h)=u(r_1+1,r_2,h),
    \qquad
    h=r_2,r_2+1,\ldots,2(r_2-1).
    \]

    \item[\textbf{b.}] For $h=2r_2-1,2r_2,\ldots,3(r_2-1)$, the polynomial
    $u(r_1+1,r_2,h)$ in the class $\mathcal{T}_{r_1+1,r_2}$ depends
    recursively on its counterpart $u(r_1,r_2,h)$ in the class
    $\mathcal{T}_{r_1,r_2}$,

    \item[\textbf{c.}]
    The last polynomial $u(r_1,r_2,r_1)$  in the class $\mathcal{T}_{r_1,r_2},$	
  as well as the polynomials $u(r_1,r_2,h)$ for $h>3(r_2-1),$ are new in the sense that they are independent of all polynomials appearing in the preceding classes.
   \item[\textbf{d.}]
  The polynomials $u(r_1,r_2,h)$ for $h>3(r_2-1)$  have a significantly more complicated structure than those corresponding to the case $h\leq 3(r_2-1)$.
\end{itemize}

The computations summarized in Table~\ref{tab:postac_dla_u}, together with the patterns observed in the families of polynomials $u(r_1,r_2,h)$, provide strong evidence for the validity of Cartier--Nishi duality in the two-dimensional triangular case. Based on these computations and observations, we formulate the following conjecture.

\begin{conjecture}
Let $\Upsilon=\Upsilon\big(\psi_1,\psi_2, \delta\big)$ be a triangular \tm module without nilpotence, where $\rk\psi_1>\rk\psi_2>1$. Then
 the Cartier--Nishi theorem holds.
 \end{conjecture}

Below we present a sample table describing the polynomials $u(r_1+1,r_2,h)$ for the classes$\mathcal{T}_{4,3}$, $\mathcal{T}_{5,3}$, $\mathcal{T}_{6,3}$, $\mathcal{T}_{7,3}$, and part of the class $\mathcal{T}_{8,3}$ satisfying $h\leq 3(r_2-1)$.

\begin{table}[htbp]
    \begin{tabularx}{\textwidth}{ccc@{\hspace{1.2em}}Y}
\toprule
$r_1$ & $r_2$ & $h$ & $u(r_1,r_2,h)$ \\
\midrule

$4$ & $3$ & $3$ & $-d$ \\[0.6em]
\midrule

$5$ & $3$ & $3$ & $u(4,3,3)$ \\[0.6em]
$5$ & $3$ & $4$ & $d b_2^{(1)} - d \tau$ \\[0.6em]
\midrule 
$6$ & $3$ & $3$ & $u(4,3,3)$ \\[0.6em]
$6$ & $3$ & $4$ & $u(5,3,4)$  \\[0.6em]
$6$ & $3$ & $5$ &
$ b_1^{(2)} d - b_2^{(1)+(2)} d - 
 a_1 d^{(1)} + \Big(b_2^{(2)} d - a_2 d^{(2)}\Big) \tau + \Big(-d - a_3 d^{(3)}\Big) \tau^2 - 
 a_4 d^{(4)} \tau^3 $
\end{tabularx}
\end{table}

\begin{table}[htbp]
\begin{tabularx}{\textwidth}{ccc@{\hspace{1.2em}}Y}
 &  & &
$  - a_5 d^{(5)} \tau^4 - d^{(6)} \tau^5$\\[0.6em]
 \midrule
$7$ & $3$ & $3$ & $u(4,3,3)$ \\[0.6em]
 $7$ & $3$ & $4$ & $u(5,3,4)$  \\[0.6em]
$7$ & $3$ & $5$ & $u(6,3,5) + (1 - a_6) d^{(6)} \tau^5-d^{(7)} \tau^6$  \\[0.6em] 
\midrule
$7$ & $3$ & $6$ & 
  $d \Big(\theta -\theta ^{(3)} + b_1^{(3)} b_2^{(1)}+ b_1^{(2)} b_2^{(3)}-
   b_2^{(3)+(2)+(1)}\Big)-a_1 d^{(1)}\Big( b_2^{(4)} +a_1 b_2^{(1)}\Big) 
   +  \left(a_1 d^{(1)}-d b_1^{(3)}+d b_2^{(3)+(2)}-a_2 d^{(2)}\Big( b_2^{(2)} +a_2 b_2^{(5)} \Big)\right)\tau
   +\Big(a_2 d^{(2)}-d b_2^{(3)} 
   -a_3 d^{(3)}\Big(b_2^{(3)} +b_2^{(6)} \Big) \Big)\tau ^2 
   + \left(d+a_3 d^{(3)}-a_4 d^{(4)}\Big( b_2^{(4)} + b_2^{(7)} \Big)\right)\tau ^3
    + \left(a_4 d^{(4)}-a_5 d^{(5)}\Big(b_2^{(5)} +b_2^{(8)}\Big)\right) \tau ^4
    +\left(a_5 d^{(5)} -a_6d^{(6)} \Big(b_2^{(6)}+b_2^{(9)} \Big)\right)\tau ^5
   +\left(a_6 d^{(6)}  -d^{(7)}\Big(b_2^{(7)} +b_2^{(10)}\Big) \right)\tau ^6
   +\tau ^7 d^{(7)}$
   \\ 
\midrule
   $8$ & $3$ & $3$ & $u(4,3,3)$  \\[0.6em]
$8$ & $3$ & $4$ & $u(5,3,4)$\\[0.6em]
$8$ & $3$ & $5$ & $u(7, 3, 5) + \big(1 - a_7\big) d^{(7)} \tau^6 - d^{(8)} \tau^7$ \\[0.6em]
$8$ & $3$ & $6$ & $u(7, 3, 6)+d^{(7)}\Big(1-a_7\Big)\Big( b_2^{(7)}+ b_2^{(10)} \Big)\tau^6-\Big( d^{(7)}\big(1-a_7\big)  +d^{(8)}\Big( b_2^{(8)}-b_2^{(11)}\Big)  \Big)\tau^7+d^{(8)}\tau^8$  \\[0.6em]
\bottomrule
\end{tabularx}
\caption{The polynomial $u(\tau)$ inducing the isomorphism
$\Upsilon \cong (\Upsilon^\vee)^\vee$ for selected values of $(r_1,r_2,h)$ for $h\leq 3(r_2-1)$.}
\label{tab:postac_dla_u}
\end{table}
In  Table \ref{tab:postac_dla_u_837} we provide several examples for which the condition $h>3(r_2-1)$ is satisfied.
\begin{table}[htbp]
\begin{tabularx}{\textwidth}{ccc@{\hspace{1.2em}}c@{\hspace{1.2em}}c}
\toprule
$r_1$ & $r_2$ & $h$ & 
LeafCount$\big(u(r_1,r_2,h)\big)$ &
ByteCount$\big(u(r_1,r_2,h)\big)$  \\
\midrule
$8$ & $3$ & $7$ & $1634$ & $54776$\\
\midrule
$9$ & $3$ & $7$ & $1902$ & $63944$\\
$9$ & $3$ & $8$ & $6221$ & $209224$\\
\midrule
$10$ & $3$ & $7$ & $2188$ & $73752$\\
$10$ & $3$ & $8$ & $7236$ & $243896$\\
$10$ & $3$ & $9$ & $40141$ & $1354216$\\
\bottomrule
\end{tabularx}
\caption{Growth of symbolic complexity in the polynomial $u(r_1,r_2,h)$ for $h>3(r_2-1)$.}
\label{tab:postac_dla_u_837}
\end{table}

From the computations performed, we are naturally led to the conclusion that there is a substantial difference between the structures of the polynomials $u(r_1,r_2,h)$ for $h\leq 3(r_2-1)$ and for $h>3(r_2-1).$ The former exhibits a distinctly recursive and repetitive pattern, whereas the latter does not appear to possess these characteristics. In the present work, we shall focus exclusively on the study of the former structure. To this end, we introduce the following definition.

\begin{definition}\label{def:ALD}
Let $\Upsilon=\Upsilon(\psi_1,\psi_2,\delta)$ be a triangular module with reduced form
$\Upsilon^{\mathrm{red}}
=\Upsilon(\psi_1,\psi_2,\delta^{\mathrm{red}})$
We say that $\Upsilon$ allows for \emph{Almost Low Degree biderivations} (abbreviated as $\mathrm{ALD}$-biderivations) if
$\deg_{\tau}\delta^{\mathrm{red}}
\leq
3(\operatorname{rk}\psi_2-1).$
\end{definition}

Recall that the reduced form of a \tm module $\Upsilon$ need not be defined over the base field $K$, but may instead be defined over a finite field extension $K(\Upsilon)$ of $K$.

In this work, we shall prove the following theorem.
\begin{theorem}\label{thm:main}
 Let $\Upsilon=\Upsilon\big(\psi_1,\psi_2, \delta\big)$ be a triangular \tm module without nilpotence, where $\rk\psi_1>\rk\psi_2>1$. If $\Upsilon$ allows for $\mathrm{ALD}$-biderivations, then
 the Cartier--Nishi theorem holds.
\end{theorem}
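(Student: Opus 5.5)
By \cite[Theorem~9.1]{kk25}, the full Cartier--Nishi statement for triangular \tm modules follows once we know ${\Upsilon^\vee}^\vee\cong\Upsilon$. By the discussion after \eqref{doubledualtrojkatnymoduldim2}, this is equivalent to condition \eqref{eq:izo_warunek}. So the whole plan is to exhibit an explicit skew polynomial $u(\tau)$ with
\[
a_{r_1}\,{\delta^\vee}^\vee\,\tfrac{1}{b_{r_2}}-\delta=\delta^{(u)}_{\psi_2,\psi_1}.
\]
The reductions come first. Replacing $\Upsilon$ by its reduced form $\Upsilon^{\mathrm{red}}$ does not change isomorphism classes. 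Duality is functorial and the isomorphism class of the double dual is preserved, so we may assume $\delta=\delta^{\mathrm{red}}$, with $\deg_\tau\delta\le 3(r_2-1)$. After the finite extension of Proposition~\ref{prop:monic_drinfeld} we may assume $\Upsilon$ is monic, which makes the conjugation factors $a_{r_1},b_{r_2}$ trivial. If $\deg_\tau\delta<r_2$, the LD case of \cite{kk25} applies. Since $\delta\mapsto\delta^\vee$ and $\delta^\vee\mapsto{\delta^\vee}^\vee$ are $\Fq$-linear \cite[Corollary~8.8]{kk25}, and $\Derin$ is a subspace, it suffices to treat $\delta=d\tau^h$ with $r_2\le h\le \min\{r_1-1,3(r_2-1)\}$ and add the resulting $u$'s.

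Next we need the dual explicitly. By \cite[Corollary~8.12]{kk25}, $\delta^\vee$ lives in the last column. I would compute it by running Algorithm~\ref{alg:dual} symbolically for $\delta=d\tau^h$: reduce $C\cdot E\,c\tau^{k-1}$ modulo $\Derin(\Upsilon,C)$, which amounts to dividing by $\psi_2$ on the right. Since $h\le 3(r_2-1)$, at most three division steps by $\psi_2$ occur. This is exactly where the bound enters, and it explains Proposition~\ref{prop:deg_dual_module}: the degree of $\delta^\vee$ is at most $\lfloor (h-1)/(r_2-1)\rfloor+1\le 3$. This gives a closed formula for $\delta^\vee$ as a sum of at most three explicit pieces (the decomposition the paper calls \eqref{eq:postac_duala}), with coefficients polynomial in $d^{(i)}$ and $b_j^{(i)}$.

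Then I would compute ${\delta^\vee}^\vee$ following Algorithm~\ref{alg:doubledual}, separately for each piece. First I subtract the inner biderivation built from $V_{r_2}(\mu)$. Then I descend in degree in the last entry using the families $\delta^{(V_{r_1}(\mu)\tau^{k})}$ and $\delta^{(W_{l,r_1}(\mu)\tau^{k})}$ from \eqref{eq:biderywacje_wewnetrzne_cykl}, tracking the coefficient of $c\tau$ in the last slot. Because $\delta^\vee$ has $\tau$-degree at most $3$, only boundedly many ``layers'' of reduction interact nontrivially. The $\mu$'s produced at each degree $d$ then satisfy a linear recursion in $d$ whose shape depends only on $r_2$, plus a shift governed by $a_1,\dots,a_{r_1-1}$. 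This is what Table~\ref{tab:postac_dla_u} shows in the stabilization patterns \textbf{a}--\textbf{b}.

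Finally I would guess $u(\tau)$ from these patterns in three regimes: $r_2\le h\le 2(r_2-1)$, where $u$ is independent of $r_1$; $2r_2-1\le h\le 3(r_2-1)$, where $u$ is defined recursively in $r_1$; and $h=r_1-1$. I would then verify $\delta^{(u)}_{\psi_2,\psi_1}=u\psi_2-\psi_1u$ against ${\delta^\vee}^\vee-d\tau^h$ coefficientwise, by induction on $r_1$ using the observed recursion $u(r_1+1,r_2,h)=u(r_1,r_2,h)+(\text{correction in }\tau^{r_1-1},\tau^{r_1})$. The main obstacle is this last step: obtaining a closed form for ${\delta^\vee}^\vee$ that is uniform in $r_1$, and proving that the reduction produces precisely the recursive corrections. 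I would first try to isolate a lemma giving the last-slot coefficient after reducing a single monomial $\mu\tau^{k}$ in the last column. That lemma would describe a Horner-type unwinding against $\psi_1$, and the induction on $r_1$ would then reduce to checking the two top-degree terms.
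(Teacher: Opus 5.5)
Your preliminary reductions agree with the paper's: reduced form, monic after a finite extension, $\Fq$-linearity down to $\delta=d\tau^h$ with $r_2\le h\le 3(r_2-1)$, and $\deg_\tau\Upsilon^\vee\le 3$. The gap is the step that actually proves ${\delta^\vee}^\vee\equiv d\tau^h \pmod{\Derin(\psi_2,\psi_1)}$: it is missing, and the substitute you propose cannot be carried out. The relations between $u(r_1,r_2,h)$ and $u(r_1+1,r_2,h)$ in Section~\ref{poly} are only empirical. Passing from $r_1$ to $r_1+1$ replaces $\psi_1$ by a different Drinfeld module, so $\psi_1^\vee$, $\Upsilon^\vee$ and the whole double-dual reduction change. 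Nothing relates the two computations, so there is no inductive step. Moreover, for fixed $h$ the base case $r_1=h+1$ is the last polynomial of the class $\mathcal{T}_{h+1,r_2}$. By observation \textbf{c} it is not determined by anything computed earlier, and you give no formula for it. So ``guess $u$ from the tables and verify by induction on $r_1$'' has neither a base nor a step. The uniform-in-$r_1$ description of ${\delta^\vee}^\vee$ that you flag as the obstacle is exactly what the theorem needs, and the proposal does not supply it.

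The missing idea is that neither $u$ nor ${\delta^\vee}^\vee$ is ever needed in closed form.
\begin{itemize}
\item Group $\delta^\vee$ by the coefficients $\xi_{k,l}$ of the inner biderivations used to compute it, rather than by monomials: $\delta^\vee=[0,\dots,0,1]\otimes\sum_{k,l}\Delta_{k,l}(\xi_{k,l})$ (Theorem~\ref{thm:postac_duala}). By linearity, ${\delta^\vee}^\vee=\sum_{k,l}\Delta_{k,l}^\vee$.
\item Dualize each piece separately (Lemma~\ref{lem:straszny}). Each piece is an explicit inner biderivation plus a short tail free of $\psi_1$. For $l\le r_2-2$, $\Delta_{1,l}^\vee=\xi_{1,l}\tau^l\psi_2-\psi_1\xi_{1,l}\tau^l-\xi_{1,l}\sum_{j=0}^{l}b^{(l)}_{r_2-l+j}\tau^{r_2+j}$. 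Next, $\Delta_{1,r_2-1}^\vee=\xi_{1,r_2-1}(\theta^{(r_2-1)}-\theta)\tau^{r_2-1}$.
\item For the second layer, $\Delta_{2,l}^\vee=U\psi_2-\psi_1U-\sum_{i=1}^{r_1}a_i\tau^iV$, with $U=\sum_{i=1}^{r_1}a_i\xi_{2,l}^{(i-1)}\tau^{l+i-1}$ and $V=\xi_{2,l}^{(-1)}\sum_{j=r_2-l}^{r_2}b_j^{(l-1)}\tau^{j+l-1}$. Hence $\Delta_{2,l}^\vee\equiv-(V\psi_2-\theta V)$.
\item The roots $\xi_{2,l}^{(-1)}$ lie in $K$ because in the ALD range the $\xi_{2,l}$ obey \eqref{eq:rekurencja_ostatni}, which has no $\theta$-term. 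This is precisely what fails beyond that range.
\item Add the tails and collect powers of $\tau$. Each coefficient below $\tau^h$ vanishes by one of the recursions \eqref{eq:rekurencja_pierwszy}, \eqref{eq:rekurencja_ostatni} that defined the $\xi_{k,l}$, and the top coefficient is $-\xi_{1,h-r_2}=d$.
\end{itemize}
All $r_1$-dependence is absorbed into $U$, which explains observations \textbf{a}--\textbf{b} rather than presupposing them. Your suggested per-monomial lemma points in this direction. However, the inner biderivations only become visible after the monomials of $\delta^\vee$ are grouped by $\xi_{k,l}$.
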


\section{Explicit form of the dual \tm module}\label{sec:explicit-dual}

We begin this section by proving Proposition \ref{prop:deg_dual_module}. The techniques developed in the proof will subsequently be used to determine the explicit form of the dual \tm module.

\begin{proof}[Proof of Proposition \ref{prop:deg_dual_module}]
It is enough to consider generators of the form $d\tau^h$; see \cite[Corollary 8.8]{kk25}. If $h<r_2$, then \cite[Proposition 8.11]{kk25} implies that $\deg_\tau \Upsilon^\vee = 2$, and hence the assertion of the proposition follows.

Now suppose that $h\ge r_2$. By \cite[Lemma 8.9]{kk25}, there is an isomorphism of ${\mathbb F}_q$-linear spaces
$$\Upsilon^\vee\cong  V_2=\Big[ K\tau+K\tau^2+\cdots+K\tau^{r_2-1}\, ,\, K\tau+K\tau^2+\cdots+K\tau^{r_1-1} \Big].$$
   In order to determine the \tm module structure  on $\Upsilon^\vee$ we transfer the multiplication    \eqref{eq:wzor_na_mnozenie_w_Ext} via the above  isomorphism. Consider the following coordinate system on  $V_2$:
    
    \begin{equation}
        \label{eq:baza:dual}
        \Big(\big[ \tau^k,0 \big] \Big)_{k=1}^{r_2-1}, \Big( \big[ 0,\tau^k\big] \Big)_{k=1}^{r_1-1}.
    \end{equation}
    Next, we compute the results of the action of $t$ on the generators  $\big[ c\tau^k,0 \big]$ and  $\big[ 0,c\tau^k\big]$ and express the obtained results in the chosen coordinate system.  

    Since we are only interested in determining $\delta^\vee$, it suffices to compute the action of $t$ on those generators that contribute to the form of $\delta^\vee$.

    If  $k\in\{1,\dots, r_1-2\}$, then
    $$t*\big[0,c\tau^k\big]= \Big[ 0,(\theta+\tau)c\tau^k \Big]  =  \Big[ 0, \theta c\tau^k +c^{(1)} \tau^{k+1}  \Big]\in V_2,$$
   then  
    $t*\big[ 0, c\tau^k\big]$ has the following representation in the fixed basis:
    $$t*\big[0,c\tau^k\big]=\Big[\ 0,\dots,0\ \Big|\ \podwzorem{0,\dots, 0}{k-1 \textnormal{ terms}}, \theta , \tau , 0, \dots, 0\ \Big].$$
    This shows that all entries of the matrix $\delta^\vee$ outside the last column are zero.

    Consider now the last column of
    $\delta^\vee$. The terms there come from the action of  $t$ on the generator $\big[ 0, c\tau^{r_1-1}\big]$. Then
    \begin{align*}
        t*\big[ 0, c\tau^{r_1-1}\big]&= (\theta+\tau)\big[ 0, c\tau^{r_1-1}\big]=\Big[ 0, \theta c\tau^{r_1-1} + c^{(1)}\tau^{r_1} \Big]\notin V_2. 
    \end{align*}
    Since the degree of $\theta c\tau^{r_1-1} + c^{(1)}\tau^{r_1}$ equals 
    $r_1$ we reduce it by means of the following inner biderivation
    $$\delta^{\big(\big[0,  \mu\tau^0 \big]\big)}_{\Upsilon,C}=\Big[\, \mu d\tau^h\, \big|\, \delta^{(\mu\tau^0)}_{\psi_1,C}     \,\Big ] \quad\textnormal{for}\quad 
    \mu =\dfrac{c^{(1)}}{a_{1, r_{1} }}.$$ 
    As a result one gets  the following equality:
    \begin{align*}
         t*\big[ 0, c\tau^{r_1-1}\big]&= \Bigg[\ -\dfrac{c^{(1)}}{a_{1, r_{1} }}\cdot  d \tau^{h}\ \Big|\  \textnormal{terms for } \psi_{1}^\vee\  \Bigg]\notin V_2.
    \end{align*}
    Our goal is to perform successive reductions of the first coordinate until its degree becomes less than $r_2$. To achieve this, we will use inner biderivations of the form
  $$\delta^{\big(\big[ \mu\tau^l,0 \big]\big)}_{\Upsilon,C}=\Big[\, \delta^{(\mu\tau^l)}_{\psi_2,C}\, \big|\, 0 \,\Big ] \quad\textnormal{where}\quad 
  \delta^{(\mu\tau^l)}_{\psi_2,C}=\mu\Big(\theta^{(l)}-\theta\Big)\tau^l+ \Big(\mu  b_1^{(l)}-\mu^{(1)}\Big)\tau^{l+1}+\sum\limits_{j=2}^{r_2}\mu b_j^{(l)}\tau^{j+l}.
   $$ 
   Since the biderivation $\delta^{\big(\big[ \mu\tau^l,0 \big]\big)}_{\Upsilon,C}$ depends only on $l\in\mathbb{Z}_{\geq 0}$ and the element $\mu\in K$, we will abbreviate it, for simplicity, as $(l,\mu)$. 
   We start with the reduction of the term
   $-\dfrac{c^{(1)}}{a_{1, r_{1} }}\cdot  d \tau^{h}$ by means of the inner biderivation  $(h-r_2,c^{(1)}\xi_{1,h-r_2})$ for the suitably chosen $\xi_{1,h-r_2}\in K$. As a result of this reduction, terms involving $c^{(1)}$ of degrees from $h-r_2$ to $h-1$ may appear.\footnote{Depending on which coefficients $b_j$ in the biderivation are nonzero, terms involving $c^{(1)}$ may arise. This does not affect the argument, since the inner biderivation $(l,0)$ is zero. On the other hand, the term involving $c^{(2)}$ of degree $h-(r_2-1)$ is always present, regardless of the form of $\psi_2$.}

We now eliminate all terms involving $c^{(1)}$. This is accomplished by means of the following sequence of inner biderivations

\begin{equation}\label{eq:pierwszy_ciag_redukcji}
      \big( l, c^{(1)}\xi_{1,l}\big)\quad \textnormal{for}\quad l=h-r_2-1,h-r_2-2,\dots, 2,1,0,
  \end{equation}
  where $\xi_{1,l}$ are suitably chosen elements from the field
  $K$.\footnote{At the moment we are not interested in the explicit form of the elements $\xi_{1,l}$, although it is worth noting that they are described by linear recurrences.}
  After carrying out this sequence of reductions on the first coordinate, terms involving $c^{(2)}$ appear in degrees starting from $h-(r_2-1)$. These terms are eliminated by means of the following sequence of inner biderivations.
\begin{equation}
    \label{eq:drugi_ciag_redukcji}
    \big( l, c^{(2)}\xi_{2,l}\big)\quad \textnormal{for some}\quad \xi_{2,l}\in K\quad\textnormal{and}\quad l= h-2r_2+1,h-2r_2,\dots, 2,1,0.
\end{equation}
Similarly, these reductions give rise to terms involving $c^{(3)}$ beginning in degree
$l = h - 2(r_2-1).$
They are eliminated by applying the following sequence of inner biderivations.
\begin{equation}
    \label{eq:trzeci_ciag_redukcji}
    \big( l, c^{(3)}\xi_{3,l}\big)\quad \textnormal{for some}\quad \xi_{3,l}\in K\quad\textnormal{and}\quad l= h-3r_2+2,h-3r_2+1,\dots, 2,1,0.
\end{equation}
Continuing this process we use the following sequences of inner biderivations:
\begin{align}\label{eq:kolejne_ciagi_redukcji}
    \big( l, c^{(4)}\xi_{4,l}\big)&\quad \textnormal{for}\quad l= h-3(r_2-1)-r_2,  \dots, 1,0,\\
     \big( l, c^{(5)}\xi_{5,l}\big)&\quad \textnormal{for}\quad l= h-4(r_2-1)-r_2, \dots, 1,0,\nonumber\\  
     \vdots& \nonumber\\ 
      \big( l, c^{(q)}\xi_{q,l}\big)&\quad \textnormal{for}\quad l= h-(q-1)(r_2-1)-r_2, \dots, 1,0. \nonumber
\end{align}
where  $q$ is the least natural number satisfying
$$(\star)\quad h-q(r_2-1)<r_2.$$
Finally we obtain that
 \begin{align*}
         t*\big[ 0, c\tau^{r_1-1}\big]&= \Bigg[\ \sum\limits_{j=1}^{r_2-1}\sum\limits_{i=1}^{q+1}\alpha_{i,j}c^{(i)} \tau^j \Big|\  \textnormal{terms for } \psi_{1}^\vee\  \Bigg]\in V_2,
    \end{align*}
    for some $\alpha_{i,j}\in K$.
   Expressing this in the basis  \eqref{eq:baza:dual} we obtain 
    \begin{align} \label{eq:postac_duala_w_propozycji}
         t*\big[ 0, c\tau^{r_1-1}\big]&= \Bigg[\, \sum\limits_{i=1}^{q+1}\alpha_{i,1}c^{(i)},\, 
         \sum\limits_{i=1}^{q+1}\alpha_{i,2}c^{(i)},\, \dots,\, \sum\limits_{i=1}^{q+1}\alpha_{i,r_2-1}c^{(i)}\, \Big|\  \textnormal{terms for } \psi_{1}^\vee\  \Bigg]\\
         &= \Bigg[\, \sum\limits_{i=1}^{q+1}\alpha_{i,1}\tau^i,\, 
         \sum\limits_{i=1}^{q+1}\alpha_{i,2}\tau^i,\, \dots,\, \sum\limits_{i=1}^{q+1}\alpha_{i,r_2-1}\tau^i\, \Big|\  \textnormal{terms for } \psi_{1}^\vee\  \Bigg]_{\tau=c}.\nonumber
    \end{align}
    The condition $(\star)$ implies that $\deg_\tau\Upsilon^\vee=q+1=\left\lfloor
            \frac{h-1}{r_2-1}
            \right\rfloor+1$ and the proof is completed.  
\end{proof}
Determining the explicit forms of $\xi_{k,l}$ and $\alpha_{i,j}$ yields a concrete description  of the dual module $\Upsilon^\vee$. 
Before we carry this out in the next theorem,
consider the following example.
\begin{example}\label{exaple736}
    Let $\Upsilon=\Upsilon(\psi_1,\psi_2,\delta)$ be a triangular \tm module such that
    $$\psi_1=\theta+\sum\limits_{i=1}^6 a_i\tau^i+\tau^7,\quad \psi_2=\theta+\sum\limits_{j=1}^2 b_j\tau^j+\tau^3,
    \quad
    \delta=d\tau^6.$$
   Analogously as in the former proof, one can determine the dual \tm module
     $\Upsilon^\vee$. We obtain the following reccursive relations for $\xi_{k,l}$:
     \begin{align}
        \label{rekurencje_dla_mu:7:3:6}
        \xi_{1,3}&=-d,\qquad\xi_{1,2}=-\xi_{1,3}b_2^{(3)}, \qquad\xi_{1,1}= -\xi_{1,3}b_1^{(3)}-\xi_{1,2}b_2^{(2)},\\\nonumber
        \xi_{1,0}&= \xi_{1,3}\big( \theta-\theta^{(3)}\big)-\xi_{1,2}b_1^{(2)} -\xi_{1,1}b_2^{(1)}\\ \nonumber
        \xi_{2,1}&=\xi_{1,3}^{(1)},\qquad \xi_{2,0}=\xi_{1,2}^{(1)}-\xi_{2,1}b_2^{(1)}. 
    \end{align}
    Additionally, the dual biderivation  $\delta^\vee$ has the following form:
    \begin{align*}
        \delta^\vee=\left[\begin{array}{cccc}
             0&\cdots &0&   \Big( \xi_{1,1}\big(\theta -\theta^{(1)}\big)-\xi_{1,0}b_1\Big)\tau+ \Big(\xi_{1,0}^{(1)}-\xi_{2,0}b_1+\xi_{2,1}\big(\theta -\theta^{(1)}\big)\Big)\tau^2 +\xi_{2,0}^{(1)} \tau^3\\
             0&\cdots &0&  \Big(\xi_{1,2}\big(\theta -\theta^{(2)}\big)-\xi_{1,0}b_2-\xi_{1,1} b_1^{(1)}\Big)\tau+\Big(\xi_{11}^{(1)}-\xi_{2,0}b_2-\xi_{2,1} b_1^{(1)}\Big)\tau^2+\xi_{2,1}^{(1)}\tau^3
        \end{array}\right].
    \end{align*}
Notice that, by the form of \eqref{rekurencje_dla_mu:7:3:6}, both $\xi_{2,1}$ and $\xi_{2,2}$ are $q$-th powers. This observation is important because, in determining the reduced form of the double dual of this module, we will make use of the $q$-th roots $\xi_{2,1}^{(-1)}$ and $\xi_{2,2}^{(-1)}$, which belong to the field of definition $K$.

   \end{example}

Denote 
$$\Delta_{k,l}(\xi)=
\left[\begin{array}{c}
0\\
\vdots \\
0\\
\xi\Big(\theta - \theta^{(l)}\Big)\tau^k\\[3mm]
 \xi^{(1)}\tau^{k+1} -\xi b_{1}^{(l)}\tau^k\\[3mm]
  -\xi b_{2}^{(l)}\tau^k\\[3mm]
  -\xi b_{3}^{(l)}\tau^k\\ \vdots\\
  -\xi b_{r_2-1-l}^{(l)}\tau ^k     
\end{array}\right]\in \Mat_{r_2-1\times 1}\big(K\{\tau\}\big), \quad \textnormal{for}\quad l=0,1,\cdots, r_2-1$$
where,  for $l>0$, the first $l-1$ rows vanish,  and for  $l=0$ the row $\xi\Big(\theta - \theta^{(l)}\Big)\tau^k$ is omitted.
Using Proposition \ref{prop:monic_drinfeld}, we determine the explicit form of the dual \tm module under the assumption that the triangular \tm module is monic. However, by following the argument below, one can also derive the corresponding description for a non-monic triangular \tm module.

  \begin{theorem}\label{thm:postac_duala}
Let $\Upsilon(\psi_1,\psi_2,\delta)$ be a two-dimensional triangular \tm module, where
$
\psi_1=\theta+\sum_{i=1}^{r_1-1}a_i\tau^i+\tau^{r_1},
\psi_2=\theta+\sum_{i=1}^{r_2-1}b_i\tau^i+\tau^{r_2},
$
and
$
\delta=d\tau^h,
$
with $r_1>h\ge r_2>1$. Let $D:=\deg_\tau \Upsilon^\vee$, and for $k=1,\dots,D-1$ define
$
M_k:=h-(k-1)(r_2-1)-r_2.
$

Then
\begin{equation}
\label{eq:postac_duala}
\delta^\vee=\big[0,\ldots,0,1\big]
\otimes
\Bigg(
\sum_{k=1}^{D-2}\sum_{l=0}^{r_2-1}\Delta_{k,l}(\xi_{k,l})
+
\sum_{l=0}^{M_{D-1}}\Delta_{D-1,l}(\xi_{D-1,l})
\Bigg),
\end{equation}
where $\otimes$ denotes the Kronecker product of matrices, and the coefficients $\xi_{k,l}$ are defined recursively as follows:
\begin{align}
       \label{eq:rekurencja_pierwszy}
            \xi_{1,l}&=\left\{\begin{array}{ccl}
                -d & \textnormal{for} & l=M_1\\[2mm]
                -\sum\limits_{j=1}^{M_1-l} \xi_{1,l+j}b_{r_2-j}^{(l+j)}&  \textnormal{for}& l=M_1-1, M_1-2,\dots, M_{2}  \\[2mm]
                -\xi_{1,l+r_2}\big(\theta^{(l+r_2) }-\theta\big)-\sum\limits_{k=1}^{r_2-1} \xi_{1,l+r_2-k}b_{k}^{(l+r_2-k)} &  \textnormal{for}& l=M_2-1, M_{2}-2,\dots, 1,0.
            \end{array}\right.
   \end{align}
    For $k=2,3,\dots, D-2$ we have
   \begin{align}
       \label{eq:rekurencja_srodek}
            \xi_{k,l}&=\left\{\begin{array}{ccl}
               \xi^{(1)}_{k-1,M_{k-1}}& \textnormal{for} & l=M_k\\[2mm]
                 \xi_{k-1,l+r_2-1}^{(1)}-\sum\limits_{j=1}^{M_k-l} \xi_{k,l+j}b_{r_2-j}^{(l+j)}&  \textnormal{for}& l=M_k-1,\dots, M_{k+1}  \\[2mm]
               \xi_{k-1,l+r_2-1}^{(1)}-\xi_{k,l+r_2}\big(\theta^{(l+r_2)}-\theta \big)-\sum\limits_{k=1}^{r_2-1} \xi_{k,l+r_2-k}b_{k}^{(l+r_2-k)} &  \textnormal{for}& l=M_{k+1}-1, \dots, 1,0.
            \end{array}\right.
   \end{align}
   For $k=D-1$ we have
    \begin{align}
       \label{eq:rekurencja_ostatni}
            \xi_{D-1,l}&=\left\{\begin{array}{ccl}
                \xi^{(1)}_{D-2,M_{D-2}} & \textnormal{for} & l=M_{D-1}\\[2mm]
                 \xi_{D-2,l+r_2-1}^{(1)}-\sum\limits_{j=1}^{M_{D-1}-l} \xi_{D-1,l+j}b_{r_2-j}^{(l+j)}&  \textnormal{for}& l=M_{D-1}-1,\dots,1, 0
            \end{array}\right.
   \end{align}
\end{theorem}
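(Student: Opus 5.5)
The plan is to make the reduction in the proof of Proposition~\ref{prop:deg_dual_module} fully explicit. The only column of $\delta^\vee$ that can be nonzero is the last one. It comes from $t*[0,c\tau^{r_1-1}]$. After the reduction by $\delta^{([0,\mu\tau^0])}_{\Upsilon,C}$ with $\mu=c^{(1)}$ (monic case), the first coordinate equals $-c^{(1)}d\tau^h$. We then reduce this coordinate below degree $r_2$ using the inner biderivations $(l,\mu)=\delta^{([\mu\tau^l,0])}_{\Upsilon,C}$ in the order prescribed by \eqref{eq:pierwszy_ciag_redukcji}--\eqref{eq:kolejne_ciagi_redukcji}. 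At stage $k$ we take $\mu=c^{(k)}\xi_{k,l}$, for $l=M_k,M_k-1,\dots,0$. The ansatz is to carry, at each stage, the current first coordinate written as a sum of terms $c^{(k)}(\cdot)\tau^j$. We choose $\xi_{k,l}$ so that the coefficient of $c^{(k)}\tau^{l+r_2}$ vanishes. Here the monicity of $\psi_2$ is used: the leading term of $\delta^{(\mu\tau^l)}_{\psi_2,C}$ is $\mu\tau^{l+r_2}$, so no division occurs.

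First, we compute the coefficient of $c^{(k)}\tau^{m}$ collected just before the step that kills it, with $m=l+r_2$. From the explicit formula for $\delta^{(\mu\tau^l)}_{\psi_2,C}$, a biderivation $(l',c^{(k)}\xi_{k,l'})$ contributes $c^{(k)}\xi_{k,l'}(\theta^{(l')}-\theta)$ in degree $l'$ and $c^{(k)}\xi_{k,l'}b_j^{(l')}$ in degree $l'+j$. It contributes $-c^{(k+1)}\xi_{k,l'}^{(1)}$ in degree $l'+1$; this $c^{(k+1)}$ term is what feeds the next stage. Summing the contributions to degree $l+r_2$ from the already performed steps $l'=l+1,\dots,l+r_2$ gives three parts: the $b$-terms $\sum_j\xi_{k,l+j}b^{(l+j)}_{r_2-j}$, the $\theta$-term from $l'=l+r_2$ (present only when $l+r_2\le M_k$, i.e.\ $l<M_{k+1}$), and the carry $\xi^{(1)}_{k-1,l+r_2-1}$ from stage $k-1$ (absent when $k=1$, where the initial term $-d$ replaces it). Setting the total to zero yields exactly the three-case recursions \eqref{eq:rekurencja_pierwszy}, \eqref{eq:rekurencja_srodek}, \eqref{eq:rekurencja_ostatni}. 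The case boundaries $M_k, M_{k+1}$ match the ranges in which the $\theta$-term and the carry exist. We need to check $M_k-M_{k+1}=r_2-1$, so that the carry entering degree $M_k+r_2$ at stage $k$ is $\xi^{(1)}_{k-1,M_{k-1}}$, as in the first line.

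Second, we identify the leftover low-degree part. Once stage $k$ is complete, only degrees $1,\dots,r_2-1$ survive, plus the stage-$(k+1)$ terms that are handled later. The surviving contributions of $(l,c^{(k)}\xi_{k,l})$ in degrees $<r_2$ are $\xi_{k,l}(\theta^{(l)}-\theta)c^{(k)}$ in degree $l$, $-\xi^{(1)}_{k,l}c^{(k+1)}$ in degree $l+1$, and $\xi_{k,l}b_j^{(l)}c^{(k)}$ in degree $l+j$. Because the basis \eqref{eq:baza:dual} turns $c^{(i)}$ into $\tau^i$, these land in row $l$ with $\tau^k$, row $l+1$ with $\tau^{k+1}$, and row $l+j$ with $\tau^k$. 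Up to the sign convention (the reduction subtracts the inner biderivation), this is precisely $\Delta_{k,l}(\xi_{k,l})$. Only $l\le r_2-1$ can contribute, which explains the truncation to $l=0,\dots,r_2-1$ for $k\le D-2$ and to $l\le M_{D-1}$ at the last stage. We also have to confirm that the definition of $D$ from Proposition~\ref{prop:deg_dual_module} makes stage $D-1$ the last one, i.e.\ $M_{D-1}\ge0>M_D$. The Kronecker product with $[0,\dots,0,1]$ simply places this column last, in agreement with \cite[Corollary 8.12]{kk25}.

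The main obstacle is the bookkeeping in the first step: tracking which contributions reach degree $l+r_2$ and verifying the boundary cases. These are the transitions between the three regimes, the empty-sum conventions, and the row index shifts in $\Delta_{k,l}$ (e.g.\ for $l=0$, where the $\theta$ row is dropped since $\theta^{(0)}-\theta=0$). Example~\ref{exaple736} serves as a check. I would organize the argument as an induction on $k$, with an inner downward induction on $l$, whose invariant is the explicit form of the current first coordinate.
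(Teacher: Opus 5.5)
Your proposal is correct and follows essentially the same route as the paper: you make the reductions from the proof of Proposition~\ref{prop:deg_dual_module} explicit, read off the recursions for $\xi_{k,l}$ from the vanishing of the $c^{(k)}\tau^{l+r_2}$ coefficients, and collect the surviving low-degree contributions into $\Delta_{k,l}(\xi_{k,l})$. The paper first records these contributions as the coefficients $\alpha_{i,j}$ and then regroups them by $\xi_{k,l}$, which is the same bookkeeping; your checks that $M_k-M_{k+1}=r_2-1$ and $M_{D-1}\ge 0>M_D$ are points the paper leaves implicit.
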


\begin{proof} 
We use the notation introduced in the proof of Proposition \ref{prop:deg_dual_module}. The recurrence relations for $\xi_{1,l}$ follow directly from the forms of the inner biderivations in \eqref{eq:pierwszy_ciag_redukcji}. Similarly, for $k=2,3,\dots,D-2$, the recurrence relations for $\xi_{k,l}$ follow directly from the forms of the inner biderivations appearing in \eqref{eq:drugi_ciag_redukcji}, \eqref{eq:trzeci_ciag_redukcji}, and \eqref{eq:kolejne_ciagi_redukcji}.

Observe that the numbers $M_k$ are chosen so that $M_k+r_2$ is the largest exponent for which a term involving $c^{(k)}$ appears after the corresponding reduction. Finally, the recurrence relations for $\xi_{D-1,l}$ follow from the inner biderivations
$
(l,c^{(D-1)}\xi_{D-1,l})$,
$l=M_{D-1},M_{D-1}-1,\ldots,1,0.
$
Since $M_{D-1}<r_2$, there are exactly $M_{D-1}+1$ coefficients $\xi_{D-1,l}$.
A direct verification shows that, after performing the reductions by inner biderivations described in the proof of Proposition \ref{prop:deg_dual_module}, we obtain

    \begin{align*}
        \alpha_{i,1}&=\xi_{i,1}\Big(\theta - \theta^{(1)}\Big)+ \xi_{i-1,0}^{(1)} -\xi_{i,0} b_{1}, \\
        \alpha_{i,2}&=\xi_{i,2}\Big(\theta - \theta^{(2)}\Big)+ \xi_{i-1,1}^{(1)} -\xi_{i,0} b_{2}-\xi_{i,1} b_1^{(1)}\\
        \alpha_{i,3}&=\xi_{i,3}\Big(\theta - \theta^{(3)}\Big)+ \xi_{i-1,2}^{(1)} -\xi_{i,0} b_{3}-\xi_{i,1} b_2^{(1)}-\xi_{i,2} b_1^{(2)},\\
          \end{align*}
          where we set  $\xi_{0,*}=0$ for $i=1$.
          In general we have
  \begin{align*}
      \alpha_{i,l}&=\xi_{i,l}\Big(\theta - \theta^{(l)}\Big)+ \xi_{i-1,l-1}^{(1)} -\sum\limits_{j=1}^{l}\xi_{i,l-j} b_{j}^{(l-j )}\quad \textnormal{for}\quad i=1,2,\dots, D-1\\
  \end{align*}        
and 
\begin{align*}
      \alpha_{D,l+1}&=\xi_{D-1,l}^{(1)}&&\quad \textnormal{for}&\quad l=0,1,\dots, M_{D-1}\\
      \alpha_{D,l+1}&=0&&\quad \textnormal{for}&\quad l=M_{D-1}+1,M_{D-1}+2,\dots, r_2-1. 
  \end{align*} 
  Now, rearranging \eqref{eq:postac_duala_w_propozycji} with respect to $\xi_{k,l}$ we obtain the matrix $\Delta_{k,l}(\xi_{k,l})$. 
\end{proof}

{\rm Observe that, since the matrix $\Delta_{k,l}(\xi_{k,l})$ always depends on the coefficient $\xi_{k,l}$ with the same indices, we shall henceforth abbreviate $\Delta_{k,l}(\xi_{k,l})$ to simply $\Delta_{k,l}$.}

\section{Proof of the Cartier--Nishi theorem in dimension two}\label{pfthm}

In this section, we prove the main theorem. We begin by adopting the notation introduced in the previous section. For a triangular \tm module
$\Upsilon=\Upsilon(\psi_1,\psi_2,\delta),$
we consider the triangular \tm modules
$$
\Upsilon_{k,l}^\vee :=
\begin{bmatrix}
\psi_2^\vee & \Delta_{k,l} \\
0 & \psi_1^\vee
\end{bmatrix}.
$$

Then the double dual module ${\Upsilon_{k.l}^\vee}^\vee$  has the form
$$\left[\begin{array}{cc}
{\psi_2^\vee}^\vee & 0 \\
    \Delta_{k,l}^\vee  & {\psi_1^\vee}^\vee 
\end{array}\right],$$

Moreover, by \cite[Corollary 8.8]{kk25}, we have
$${\delta^\vee}^\vee=\sum\limits_{k=1}^{D-2}\sum\limits_{l=0}^{r_2-1}\Delta_{k,l}^\vee+\sum\limits_{l=0}^{M_{D-1}}\Delta_{D-1,l}^\vee.$$

Accordingly, we reduce the determination of the double dual biderivation ${\delta^\vee}^\vee$, and consequently the proof of the Cartier--Nishi theorem, to the study of an individual summand $\Delta_{k,l}^\vee$. Before proceeding, let us consider the following example.

\begin{example}
    Consider a triangular \tm module  $\Upsilon=\Upsilon(\psi_1,\psi_2,\delta)$ from Exmaple \ref{exaple736}.
Then, by Theorem \ref{thm:postac_duala}, we have
 $$\delta^\vee=[0,0,0,0,0,1]\otimes \Big( \Delta_{1,0}+  \Delta_{1,1}+
    \Delta_{1,2}+
    \Delta_{2,0}+
    \Delta_{2,1}\Big),$$
    where the coefficients $\xi_{k,l}$ satisfy the recurrence relation \eqref{rekurencje_dla_mu:7:3:6}.
    Applying the algorithm for computing the double dual, we obtain
    \begin{align*}
        \Delta_{1,0}^\vee \equiv& -\xi_{1,0}\tau^3 \mod \Derin(\psi_2,\psi_1)  \\[2mm]
         \Delta_{1,1}^\vee \equiv& -\xi_{1,1} \tau^4 - b_2^{(1)}\xi_{1,1}\tau^3  \mod \Derin(\psi_2,\psi_1)\\[2mm]
         \Delta_{1,2}^\vee \equiv&\phantom{-} \xi_{1,2}\Big(\theta^{(2)}-\theta\Big)\tau^2   \mod \Derin(\psi_2,\psi_1)\\[2mm]
         \Delta_{2,0}^\vee \equiv& -\xi_{2,0}^{(-1)}\Bigg(\tau ^5
         +b_2^{(2)}\tau ^4+b_1^{(2)} \tau ^3
         + \left(\theta^{(2)} -\theta   \right)\tau ^2\Bigg)  \mod \Derin(\psi_2,\psi_1)
        \\[2mm] 
          \Delta_{2,1}^\vee \equiv& - \xi_{2,1}^{(-1)}\Bigg(\tau ^6
          +\left(b_2 + b_2^{(3)} \right)\tau ^5
          +\left(b_1^{(3)} +b_2^{1+(2)} \right) \tau ^4\\[2mm]
          &+\left(b_2 b_1^{(2)} +\theta ^{(3)}-\theta \right)\tau ^3
   +b_2\left(\theta^{(2)} - \theta \right)\tau ^2\Bigg)  \mod \Derin(\psi_2,\psi_1).
    \end{align*}
    In addition, the notation $\xi_{k,l}^{(-1)}$ denotes the $q$-th root of $\xi_{k,l}$. Observe that, by the recurrence relations \eqref{rekurencje_dla_mu:7:3:6}, both $\xi_{2,0}^{(-1)}$ and $\xi_{2,1}^{(-1)}$ belong to the base field $K$. Furthermore, we obtain
 \begin{equation*}
        {\delta^\vee}^\vee\equiv 
        - \xi_{2,1}^{(-1)}\tau ^6 -\podwzorem{\left(\xi_{2,0}^{(-1)}+\xi_{2,1}^{(-1)}\Big(b_2 + b_2^{(3)}\Big)\right)}{=0} \tau ^5 
         -\podwzorem{\left( \xi_{1,1}+\xi_{2,0}^{(-1)}b_2^{(2)} + \xi_{2,1}^{(-1)} \Big(b_1^{(3)} +b_2^{1+(2)}\Big) \right)}{=0} \tau ^4
         \end{equation*}
         \begin{equation*}
         -\podwzorem{\left(\xi_{1,0}+\xi_{1,1}b_2^{(1)}+ \xi_{2,0}^{(-1)}b_1^{(2)}+\xi_{2,1}^{(-1)}\Big(b_2 b_1^{(2)} +\theta ^{(3)}-\theta \Big)\right)}{=0}\tau ^3
         \end{equation*}
         \begin{equation*}
         +\podwzorem{\Big(\xi_{1,2}-\xi_{2,1}^{(-1)}b_2\Big)\Big(\theta^{(2)}-\theta\Big)}{=0}\tau^2\equiv\,  d\tau^6=\delta \mod \Derin(\psi_2,\psi_1),
    \end{equation*}
    where the vanishing of the corresponding coefficients follows from the recurrence relations \eqref{rekurencje_dla_mu:7:3:6}. This computation shows that the Cartier--Nishi theorem holds in the present case.
 \end{example}
To generalize the above argument, we must determine representatives of the elements $\Delta_{k,l}^\vee$ modulo $\Derin(\psi_2,\psi_1)$. To this end, we seek to express $\Delta_{k,l}^\vee$ in the form
\begin{equation}
    \label{eq:reprezentanty_Deltavee}
    \Delta_{k,l}^\vee=\delta_{k,l}^{\mathrm{red}}+\delta_{\psi_2,\psi_1}^{(u_{k,l}(\tau))},\quad \textnormal{where}\quad \deg_\tau\delta_{k,l}^{red}<\rk\psi_1.
\end{equation}
As before, we label each case by a triple of integers $(r_1,r_2,h),$ where
$$
\psi_1=\theta+\sum_{i=1}^{r_1-1}a_i\tau^i+\tau^{r_1},\qquad
\psi_2=\theta+\sum_{j=1}^{r_2-1}b_j\tau^j+\tau^{r_2},\qquad
\delta=d\tau^h.
$$

In the tables below, we list the representatives $\delta_{k,l}^{\mathrm{red}}$ together with the polynomials $u_{k,l}(\tau)$ for selected triples $(r_1,r_2,h)$.
\begin{table}[htbp]
    \begin{tabularx}{\textwidth}{ccc@{\hspace{1.2em}}Y@{\hspace{1.2em}}Y}
\toprule
$r_1$ & $r_2$ & $h$ & $\delta_{1,2}^{\mathrm{red}}$ & $u_{1,2}(\tau)$\\
\midrule
$8$ & $3$ & $6$ & $\xi_{1,2}\Big(\theta^{(2)}-\theta\Big)\tau^{2}$ & $0$\\[0.6em]
$8$ & $3$ & $7$ & $\xi_{1,2}\Big(\theta^{(2)}-\theta\Big)\tau^{2}$ & $0$\\[0.6em]
\midrule
\end{tabularx}
\end{table}

\begin{table}[htbp]
    \begin{tabularx}{\textwidth}{ccc@{\hspace{1.2em}}Y@{\hspace{1.2em}}Y}
     \\
\toprule
$12
$ & $4$ & $10$ & $-\xi_{1,2}\sum\limits_{j=0}^{2}
b_{2+j}^{(2)}\tau^{4+j}$ &$\xi_{1,2}\tau^2$\\[0.6em]
$12$ & $4$ & $11$ & $-\xi_{1,2}\sum\limits_{j=0}^{2}
b_{2+j}^{(2)}\tau^{4+j}$ &$\xi_{1,2}\tau^2$\\[0.6em]
\midrule
$12$ & $5$ & $10$ & $-\xi_{1,2}\sum\limits_{j=0}^{2}
b_{3+j}^{(2)}\tau^{5+j}$ & $\xi_{1,2}\tau^2$\\[0.6em]
$12$ & $5$ & $11$ & $-\xi_{1,2}\sum\limits_{j=0}^{2}
b_{3+j}^{(2)}\tau^{5+j}$ & $\xi_{1,2}\tau^2$\\[0.6em]
\end{tabularx}
\caption{Representatives of $\delta_{1,2}^{\mathrm{red}}$ for various triples $(r_1,r_2,h)$.}\label{tabela:reprezentaci12}
\end{table}

\begin{table}[htbp] 
    \begin{tabularx}{\textwidth}{ccc@{\hspace{1.2em}}Y@{\hspace{1.2em}}Y}
    \\
\toprule
$r_1$ & $r_2$ & $h$ & $\delta_{2,3}^{\mathrm{red}}$ & $u_{2,3}(\tau)$\\
\midrule
\midrule
$12$ & $5$ & $10$ & $-\xi_{2,3}^{(-1)}\cdot \sum\limits_{j= 2}^{5} \delta_{\psi_2,\theta}^{\big(b_j^{(2)}\tau^{j+2}\big)}$ & $\sum\limits_{i=1}^{12} a_i \xi_{2,3}^{(i-1)} \tau^{i+2}+\xi_{2,3}^{(-1)}\sum\limits_{j=2}^{5}   b_{j}^{(2)}\tau^{j+2}$ \\[0.6em]
$12$ & $5$ & $11$ & $-\xi_{2,3}^{(-1)}\cdot \sum\limits_{j= 2}^{5} \delta_{\psi_2,\theta}^{\big(b_j^{(2)}\tau^{j+2}\big)}$ 
& $\sum\limits_{i=1}^{12} a_i \xi_{2,3}^{(i-1)} \tau^{i+2}+\xi_{2,3}^{(-1)}\sum\limits_{j=2}^{5}   b_{j}^{(2)}\tau^{j+2}$  \\[0.6em]
\midrule
$13$ & $6$ & $10$ & $-\xi_{2,3}^{(-1)}\cdot \sum\limits_{j= 3}^{6} \delta_{\psi_2,\theta}^{\big(b_j^{(2)}\tau^{j+2}\big)}$ 
& $\sum\limits_{i=1}^{13} a_i \xi_{3}^{(i-1)} \tau^{i+2}+\xi_{2,3}^{(-1)}\sum\limits_{j=3}^{6}   b_{j}^{(2)}\tau^{j+2}$ \\[0.6em]
$13$ & $6$ & $11$ & $-\xi_{2,3}^{(-1)}\cdot \sum\limits_{j= 3}^{6} \delta_{\psi_2,\theta}^{\big(b_j^{(2)}\tau^{j+2}\big)}$ 
& $\sum\limits_{i=1}^{13} a_i \xi_{2,3}^{(i-1)} \tau^{i+2}+\xi_{2,3}^{(-1)}\sum\limits_{j=3}^{6}   b_{j}^{(3)}\tau^{j+2}$ \\[0.6em]
\end{tabularx}
\caption{Representatives of $\delta_{2,3}^{\mathrm{red}}$ for various triples $(r_1,r_2,h)$.}\label{tabela:reprezentaci23}
\end{table}
Based on the examples computed above, we infer a general form for the representatives of $\Delta_{k,l}^\vee$ modulo $\Derin(\psi_2,\psi_1).$ We formulate our conclusions in the following lemma, whose proof will be given in the next section.

\begin{lemma}\label{lem:straszny}
  Let $\Upsilon=\Upsilon(\psi_1,\psi_2,\delta)$ be a reduced monic triangular \tm module with $\rk\psi_1>\rk\psi_2>1$ and 
$\delta=d\tau^{r_2+s},$
where $s\geq 0.$

    \begin{itemize}
        \item[$(i)$] If $s\in\{0,1,\dots,r_2-2\}$, then
$$
\Delta_{1,l}^\vee
\equiv
-\xi_{1,l}\sum_{j=0}^{l}
b_{r_2-l+j}^{(l)}\tau^{r_2+j}
\mod{\Derin(\psi_2,\psi_1)}
\qquad
{\text{for }} \,\,\,\,l=0,1,\dots,s.
$$

  \item[$(ii)$] If $s\in\{r_2-1,r_2,\dots,2r_2-3\}$, then the identities in part $(i)$ hold for $\Delta_{1,l}^\vee$ for all $l=0,1,\dots,r_2-2.$ Moreover,
\begin{align*}
            \Delta_{1,r_2-1}^\vee& \equiv \xi_{1,r_2-1}\Big(\theta^{(r_2-1)}-\theta\Big)\tau^{r_2-1} \mod \Derin(\psi_2,\psi_1)\\
            \Delta_{2,l}^\vee &\equiv -\xi_{2,l}^{(-1)}\cdot \sum\limits_{j= r_2-l}^{r_2} \delta_{\psi_2,\theta}^{\big(b_j^{(l-1)}\tau^{j+l-1}\big)} \mod \Derin(\psi_2,\psi_1) \quad\textnormal{for}\quad l=0,1,2,\dots, s-(r_2-1)
        \end{align*}
    \end{itemize}
\end{lemma}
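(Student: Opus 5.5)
The plan is to compute each double dual $\Delta_{k,l}^\vee$ directly, rather than through the full ${\delta^\vee}^\vee$, by running the reduction procedure of Algorithm~\ref{alg:doubledual} by hand on the single summand $\Delta_{k,l}$. By \cite[Lemma~9.2]{kk25}, ${\Upsilon_{k,l}^\vee}^\vee$ is computed on the space $[0,\dots,0,K\tau\mid 0,\dots,0,K\tau]$. The entry $\Delta_{k,l}^\vee$ is read off from the action of $t$ on the generator $[0,\dots,0,c\tau\mid 0]$, after reducing the result modulo $\Derin(\Upsilon_{k,l}^\vee,C)$ until it lies in that space. Since $\Delta_{k,l}$ has only a few nonzero entries, in rows $l,l+1,\dots,r_2-1$, and a single power $\tau^k$ or $\tau^{k+1}$, the reduction is short enough to track explicitly. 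We use the two families of inner biderivations \eqref{eq:biderywacje_wewnetrzne_cykl}: $V_{r}(\mu)\tau^k$ clears the top-degree term in the last coordinate, and $W_{l,r}(\mu)\tau^k$ clears the intermediate coordinates. The $\Fq$-linearity from \cite[Corollary~8.8]{kk25} justifies treating each summand separately.

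For part $(i)$, with $k=1$ and $\Delta_{1,l}$ of $\tau$-degree $\le 2$, the first step applies $V_{r_2}(\mu)$ with $\mu$ equal to the coefficient coming from $C\cdot E\,c\tau$. This transports the column $\Delta_{1,l}$ into the $\psi_1^\vee$-block. There it contributes $\mu$ times the entries $\xi_{1,l}b^{(l)}_{j}$ and $\xi_{1,l}(\theta-\theta^{(l)})$. One then performs the cycle of $V_{r_1}$ and $W_{l,r_1}$ reductions in the last block and collects the coefficient of $c^{(1)}$ in the last coordinate. The substitution $c^{(i)}\mapsto\tau^i$ then gives a skew polynomial. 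The claim is that, after subtracting a single inner biderivation $\delta^{(u)}_{\psi_2,\psi_1}$ with $u$ a monomial (of the form $\xi_{1,l}\tau^{l}$, as in Table~\ref{tabela:reprezentaci12}), it becomes $-\xi_{1,l}\sum_{j=0}^{l}b^{(l)}_{r_2-l+j}\tau^{r_2+j}$. For $l<r_2-1$ the row $\xi(\theta-\theta^{(l)})$ sits strictly inside, so it is cancelled by the $W$-reductions. This is exactly why no $\theta$-term survives in $(i)$. For $l=r_2-1$ this row is the last one, and it yields the term $\xi_{1,r_2-1}(\theta^{(r_2-1)}-\theta)\tau^{r_2-1}$ of part $(ii)$.

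For the $\Delta_{2,l}$ in part $(ii)$ the entries carry $\tau^2$ and $\tau^3$, so the reduction must pass through the $V_{r_1}(\mu)\tau^{d-2}$ step with $d=3$. Here the coefficient $\mu$ involves $\xi_{2,l}^{(1)}$ composed with the leading coefficient $1$ of $\psi_1$, because the module is monic. After substitution, the resulting polynomial is naturally written with $\xi_{2,l}^{(-1)}$. These roots lie in $K$ because, by the recurrence \eqref{eq:rekurencja_srodek}, the relevant $\xi_{2,l}$ are $q$-th powers of $\xi_{1,\ast}$ modulo correction terms; the permitted range $l\le s-(r_2-1)$ is exactly where the correction terms vanish. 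I would then identify the outcome as $-\xi_{2,l}^{(-1)}\sum_j\delta^{(b_j^{(l-1)}\tau^{j+l-1})}_{\psi_2,\theta}$ plus $\delta^{(u)}_{\psi_2,\psi_1}$, with $u=\sum_i a_i\xi^{(i-1)}\tau^{i+l}+\cdots$. To do this, I would expand $u\psi_2-\psi_1u$ and compare coefficients degree by degree, using Algorithm \textsc{ComputeIzo} as a model.

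The main obstacle is this last identification for $\Delta_{2,l}$. The intermediate $W_{l,r_1}$ reductions produce long sums indexed by the $a_i$, and one must show that these sums assemble exactly into $\psi_1 u$ for the predicted $u$, leaving no residue of degree $\ge r_1$. I would first fix $l$ and prove the claimed congruence by induction on the degree $d$ in the loop of Algorithm~\ref{alg:doubledual}. The induction hypothesis would state that, after the step at degree $d$, the last coordinate equals the predicted remainder plus the degree-$\le d$ truncation of $\delta^{(u)}_{\psi_2,\psi_1}$. The hypothesis $s\le 2r_2-3$ (the ALD range) is what keeps only $k\le2$ terms present and makes this induction close.
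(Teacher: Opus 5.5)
Your overall route is the paper's. For each summand you reduce $t*c\,E_2$ in $\Upsilon_{k,l}^\vee$, transport $\Delta_{k,l}$ into the $\psi_1^\vee$-block by $[V_{r_2}(c^{(1)})\mid\mathbf{0}]$, clear with $V$- and $W$-type inner biderivations, substitute $c^{(i)}\mapsto\tau^i$, and match against some $\delta^{(u)}_{\psi_2,\psi_1}$.

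\textbf{Part $(i)$.} The mechanism you describe is wrong, and it would break the final matching. $\Delta_{1,l}$ is a single column, and the transport contracts it against $[c^{(1)},\dots,c^{(r_2-1)}]$. So every row lands in the \emph{last} coordinate of the $\psi_1^\vee$-block as $-\sum_s c^{(s)}x_s$: row indices become Frobenius twists of $c$, not coordinates. Nothing ``sits strictly inside'', and no $W$-step occurs. One $V_{r_1}(\mu)$-step with exponent $0$ and $\mu=-c^{(l+1)}\xi_{1,l}^{(1)}$ suffices, since its $(\theta^{(0)}-\theta)V_{r_1}(\mu)$ part vanishes. The $\theta$-term therefore survives: $\Delta_{1,l}^\vee=\xi_{1,l}(\theta^{(l)}-\theta)\tau^l+\xi_{1,l}\sum_{s=l+1}^{r_2-1}b^{(l)}_{s-l}\tau^s-\sum_{s=1}^{r_1}a_s\xi_{1,l}^{(s)}\tau^{s+l}$. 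It is absent from the representative only because it is the $\tau^l$-coefficient of $\xi_{1,l}\tau^l\psi_2-\psi_1\xi_{1,l}\tau^l$. Had the reduction cancelled it, subtracting $\delta^{(\xi_{1,l}\tau^l)}_{\psi_2,\psi_1}$ would reintroduce it. The case $l=r_2-1$ is special for a different reason than you give. The row that would carry $\xi^{(1)}\tau^{2}$ is row $r_2$, which does not exist. Hence there is no $\tau^2$-term and no $a_i$-terms, and $\Delta_{1,r_2-1}^\vee$ is read off directly with $u=0$.

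\textbf{Part $(ii)$.} For $\Delta_{2,l}$ the $W$-steps are needed, because the exponent-$1$ step $V_{r_1}(\mu)\tau$ deposits $(\theta^{(1)}-\theta)\mu^{(m)}\tau$ in every coordinate. Clearing these gives sums that telescope via $\sum_{k=1}^{j-1}(\theta^{(j-k)}-\theta^{(j-k-1)})=\theta^{(j-1)}-\theta$. Together with the final $V_{r_1}(A)$-step they yield $\sum_i a_i\xi_{2,l}^{(i-1)}(\theta^{(l+i-1)}-\theta)\tau^{l+i-1}$. The identity your proposal lacks is $\Delta_{2,l}^\vee=U\psi_2-\psi_1U-(\psi_1-\theta)V$, where $U=\sum_{i=1}^{r_1}a_i\xi_{2,l}^{(i-1)}\tau^{l+i-1}$ (your exponent $i+l$ is off by one) and $V=\xi_{2,l}^{(-1)}\sum_{j=r_2-l}^{r_2}b_j^{(l-1)}\tau^{j+l-1}$. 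The reduction only produces the $b_j$ with $j\le r_2-1-l$ occurring in $\Delta_{2,l}$. The remaining part $j\ge r_2-l$ of $U\psi_2$ equals $(\psi_1-\theta)V$, because $a_i\tau^i\xi_{2,l}^{(-1)}=a_i\xi_{2,l}^{(i-1)}\tau^i$. This, not the substitution, is where $\xi_{2,l}^{(-1)}$ enters. Since $-(\psi_1-\theta)V=\delta^{(V)}_{\psi_2,\psi_1}-\delta^{(V)}_{\psi_2,\theta}$, the claim follows with $u=U+V$.

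Your induction on $d$ cannot replace this. The loop has only $d=3,2$, and the $\tau$ of the reduction (degree at most $3$) is not the $\tau$ of the output, whose degree comes from the twist index of $c$. A ``degree-$\le d$ truncation of $\delta^{(u)}_{\psi_2,\psi_1}$'' conflates the two.

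Finally, $\xi_{2,l}^{(-1)}\in K$ holds, but not because correction terms vanish; for $l<M_2$ they generally do not. It holds because in \eqref{eq:rekurencja_ostatni} every term, namely $\xi_{1,\ast}^{(1)}$ and $\xi_{2,l+j}b_{r_2-j}^{(l+j)}$ with $l+j\ge 1$, is a $q$-th power, by downward induction on $l$.
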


We are now ready to prove Theorem \ref{thm:main}.
\begin{proof}[Proof of Theorem \ref{thm:main}]
By \cite[Corollary 8.8]{kk25}, it suffices to prove the theorem in the case where $\delta=d\tau^h$ is a monomial satisfying
$\rk \psi_2 \le h < \rk \psi_1.$

 By Proposition \ref{prop:monic_drinfeld}, we may assume, without loss of generality, that both $\psi_1$ and $\psi_2$ are monic. Writing $h=r_2+s$, we divide the proof into three cases according to the description given in Lemma \ref{lem:straszny}.

In the case $s\in\{0,1,\dots,r_2-2\}$, we have
$\delta^\vee=\sum_{l=0}^{s}\Delta_{1,l},$
where $\Delta_{1,l}=\Delta_{1,l}(\xi_{1,l})$, and the coefficients $\xi_{1,l}$ satisfy the following recurrence relations:
$$(\star_s)\quad \xi_{1,s}=-d,\quad (\star_l)\quad \xi_{1,l}= -\sum\limits_{j=1}^{s-l} \xi_{1,l+j}b_{r_2-j}^{(l+j)},\quad \textnormal{for}\quad  l=s-1, s-2,\dots, 0.$$
   Then 
   \begin{align*}
       {\delta^\vee}^\vee&= \sum\limits_{l=0}^{s}\Delta_{1,l}^\vee \equiv
       -\sum\limits_{l=0}^{s}  \xi_{1,l}\sum\limits_{j=0}^{l} b_{r_2-l+j}^{(l)}\tau^{r_2+j} \mod \Derin(\psi_2,\psi_1).
   \end{align*}
   After collecting terms according to the powers of $\tau$, we obtain
\begin{align*}
       {\delta^\vee}^\vee&
       \equiv  
      -\podwzorem{\sum\limits_{l=0}^{s}\xi_{1,l}b_{r_2-l}^{
      (l)}}{=0
      \textnormal{ from } (\star_0)} \tau^{r_2}-
      \podwzorem{\sum\limits_{l=1}^s\xi_{1,l}b_{r_2-l+1}^{(l)} }{=0,\textnormal{ from }(\star_1) } \tau^{r_2+1}\\ 
      &
      -\cdots- \podwzorem{\big( \xi_{1,s-1}b_{r_2}+\xi_{1,s}b_{r_2-1}\big)}{=0\textnormal{ from } (\star_{s-1}) }\tau^{r_2+s-1}- \podwzorem{\xi_{1,s}b_{r_2}}{=-d}\tau^{r_2+s}\\
      &\equiv d\tau^h  \mod \Derin(\psi_2,\psi_1).
   \end{align*}
   Therefore, in this case,
$\Upsilon \cong {\Upsilon^\vee}^\vee.$

   If $s\in\{r_2-1,r_2,\dots, 2r_2-3 \}$, then $\deg_\tau\Upsilon^\vee=D=3$, $M_1=s$, $M_2=s-(r_2-1)$ and $\delta^\vee=\sum\limits_{l=0}^{r_2-1}\Delta_{1,l}+\sum\limits_{l=0}^{M_2} \Delta_{2,l}$, where $\Delta_{k,l}=\Delta_{k,l}(\xi_{k,l})$ and the coefficients $\xi_{k,l}$ are determined recursively by
   \begin{align*}
       (\star_s)&\quad \xi_{1,s}=-d,\quad (\star_l)\quad \xi_{1,l}= -\sum\limits_{j=1}^{s-l} \xi_{1,l+j}b_{r_2-j}^{(l+j)},\quad \textnormal{for}\quad  l=s-1, s-2,\dots, M_2,\\
       (\star_l)&\quad \xi_{1,l}=-\xi_{1,l+r_2}\big(\theta^{(l+r_2) }-\theta\big)-\sum\limits_{j=1}^{r_2-1} \xi_{1,l+r_2-j}b_{j}^{(l+r_2-j)}\quad  \textnormal{for}\quad l=s-r_2,\dots,1 , 0,\\
       (\nabla_{M_2})&\quad \xi_{2,M_2}=\xi^{(1)}_{1,s},\quad 
       (\nabla_{l})\quad \xi_{2,l}=\xi_{1,l+r_2-1}^{(1)}-\sum\limits_{j=1}^{M_2-l} \xi_{2,l+j}b_{r_2-j}^{(l+j)}\quad  \textnormal{for}\quad  l=s-r_2,\dots,1, 0
       \end{align*}
  Then
  \begin{equation*}
      {\delta^\vee}^\vee = \sum\limits_{l=0}^{r_2-1}\Delta_{1,l}^\vee+\sum\limits_{l=0}^{M_2} \Delta_{2,l}^\vee \equiv 
      - \sum\limits_{l=0}^{r_2-2}\xi_{1,l}\sum\limits_{j=0}^{l} b_{r_2-l+j}^{(l)}\tau^{r_2+j}-\xi_{1,r_2-1}\Big(\theta-\theta^{(r_2-1)}\Big)\tau^{r_2-1}
      \end{equation*}
      \begin{equation*}
       -\sum\limits_{l=0}^{M_2}\xi_{2,l}^{(-1)}\cdot \sum\limits_{j= r_2-l}^{r_2} \delta_{\psi_2,\theta}^{(b_j^{(l-1)}\tau^{j+l-1})} \mod \Derin(\psi_2,\psi_1)
  \end{equation*}
  After collecting terms according to the powers of $\tau$, we obtain
 $ {\delta^\vee}^\vee 
       \equiv \sum\limits_{k=r_2-1}^{h} f_{k}\tau^k \mod \Derin(\psi_2,\psi_1)$,
  where
   $$f_{r_2-1}=\Big(\podwzorem{\xi_{1,r_2-1}-\sum\limits_{l=0}^{M_2}\xi_{2,l}^{(-1)}b_{r_2-l}^{(l-1)} }{=0,\textnormal{ from }(\nabla_0)}\Big)\Big(\theta^{(r_2-1)}-\theta\Big)=0.$$
   Similarly,
    \begin{align*}
       f_{r_2}&=-b_1^{(r_2-1)}\podwzorem{\sum\limits_{l=0}^{M_2}\xi_{2,j}^{(-1)}b_{r_2-j}^{(j-1)}}{=\xi_{1.r_2-1},\textnormal{ from }(\nabla_0)}
       -\sum\limits_{l=0}^{r_2-2}\xi_{1,l}b_{r_2-l}^{(l)}
       -\Big(\theta^{(r_2)}-\theta\Big)\sum\limits_{l=1}^{M_2}\xi_{2,l}^{(-1)}b_{r_2+1-l}^{(l-1)}\\
       &=\podwzorem{-\xi_{1.r_2-1}b_1^{(r_2-1)}-\sum\limits_{l=0}^{r_2-2}\xi_{1,l}b_{r_2-l}^{(l)}}{=\xi_{1,r_2}\big(\theta^{(r_2)}-\theta\big)\textnormal{ from }(\star_0)}
       -\Big(\theta^{(r_2)}-\theta\Big)\sum\limits_{l=1}^{M_2}\xi_{2,l}^{(-1)}b_{r_2+1-l}^{(l-1)}\\
       &= \Big(\theta^{(r_2)}-\theta\Big)\Big(\podwzorem{\xi_{1,r_2}-\sum\limits_{l=1}^{M_2}\xi_{2,l}^{(-1)}b_{r_2+1-l}^{(l-1)}}{=0,\textnormal{ from } (\nabla_1) }\Big)=0
  \end{align*}
  and
\begin{align*}
    f_{r_2+1}&=-b_2^{(r_2-1)}\podwzorem{\sum\limits_{l=0}^{M_2}\xi_{2,j}^{(-1)}b_{r_2-j}^{(j-1)}}{=\xi_{1.r_2-1},\textnormal{ from }(\nabla_0)}
   -b_1^{(r_2)}\podwzorem{ \sum\limits_{l=1}^{M_2}\xi_{2,l}^{(-1)}b_{r_2+1-l}^{(l-1)}}{=\xi_{1,r_2},\textnormal{ from }(\nabla_1)}\\
   &-\sum\limits_{l=0}^{r_2-2}\xi_{1,l}b_{r_2+1-l}^{(l)}
   -\Big(\theta^{(r_2+1)}-\theta\Big)\sum\limits_{l=1}^{M_2}\xi_{2,l}^{(-1)}b_{r_2+2-l}^{(l-1)}\\
   &=\podwzorem{-\sum\limits_{l=0}^{r_2}\xi_{1,l}b_{r_2+1-l}^{(l)} }{ =\xi_{1,r_2+1}\big(\theta^{(r_2+1)}-\theta\big),\textnormal{ from }(\star_1) }
   -\Big(\theta^{(r_2+1)}-\theta\Big)\sum\limits_{l=1}^{M_2}\xi_{2,l}^{(-1)}b_{r_2+2-l}^{(l-1)}\\
   &= \Big(\theta^{(r_2+1)}-\theta\Big)\Big( \podwzorem{\xi_{1,r_2+1}-\sum\limits_{l=1}^{M_2}\xi_{2,l}^{(-1)}b_{r_2+2-l}^{(l-1)} }{=0,\textnormal{ from } (\nabla_2) }\Big)=0
\end{align*}  
Proceeding analogously, we obtain
$f_{r_2+2}=f_{r_2+1}=\cdots=f_{h-1}=0.$
Finally, $f_h=-\xi_{2,M_2}^{(-1)}=-\xi_{1,s}=d,$
by $(\nabla_{M_2})$ and $(\star_s).$ Therefore,
${\delta^\vee}^\vee\equiv d\tau^h
\mod{\Derin(\psi_2,\psi_1)}.$
This completes the proof of the theorem.
\end{proof}

\section{Proof of  Lemma \ref{lem:straszny}}\label{pflemma}
In the proof of Lemma \ref{lem:straszny}, we shall apply to a \tm-module of the form 
$$ \Upsilon_{k.l}^\vee:=  \left[\begin{array}{cc}
\psi_2^\vee & \Delta_{k,l} \\
     0& \psi_1^\vee
\end{array}\right]$$
the \tm reduction algorithm described at the beginning of this paper.
All notation in the proof below is consistent with that used in Algorithm \ref{alg:doubledual}.
\begin{proof}[Proof of  Lemma \ref{lem:straszny}]
Let $\Upsilon=\Upsilon(\psi_1,\psi_2,\delta)$ be a monic reduced triangular \tm-module with $\delta=d\tau^{r_2+s}$ where $s\geq 0$. 
We divide the proof into two cases.
    
     Assume first that $s\in\{0,1,\dots, r_2-2\}$. For each $l\in\{0,1,\dots,s\}$ consider the \tm module $\Upsilon_{1,l}$.
    By Lemma \cite[Lemma 9.2]{kk25} we have
$$\Upsilon_{1,l}^\vee\cong
    \Big[\podwzorem{0,0,\dots, 0, K\tau}{r_2-1 - \textnormal{terms}} 
    \ \big|\ \podwzorem{0,0,\dots, 0, K\tau}{r_1-1 - \textnormal{terms}}  \Big].$$
   We shall determine the action   $t*c\cdot E_2  $ in the basis
     $$E_2=\Big[   \,0,\dots,0,\ \tau\,\Big|\;\boldsymbol{0}\,\Big],\quad E_1=\Big[
        \,\boldsymbol{0} \;\Big|\; \,0,\dots,0,\ \tau\, \Big]. $$    
In order to do this we reduce by the inner biderivation of the form \eqref{eq:biderywacje_wewnetrzne_cykl}:
\begin{align*}
    \delta_{\Upsilon_{1,l}^\vee,C}^{\big( \big[ V_{r_2} (c^{(1)} )\big| \boldsymbol{0}\big]   \big)}= \Bigg[  \delta_{\psi_2^\vee,C}^{\big( V_{r_2}(c^{(1)} ) \big)} \Big|\ 0,\dots,0, \sum_{x_s\in  \Delta_{1,l} }c^{(s)}  x_s \Bigg],
\end{align*}
where  $\sum\limits_{x_s\in \Delta_{1,l} }$ denotes the sum over all entries $x_s$ of the matrix  $\Delta_{1,l}$.
We obtain
\begin{align*}
    t*c\cdot E_2&= 
    \Bigg[\textnormal{ terms for }  {\psi_2^\vee}^\vee\ \Big|\, 0,\dots, 0,
    -\sum\limits_{x_s\in \Delta_{1,l} }c^{(s)}  x_s 
     \Bigg]\\
     &= \Bigg[\textnormal{ terms for }  {\psi_2^\vee}^\vee\ \Big|\ 0,\dots, 0,
    - c^{(l+1)}\xi_{1,l}^{(1)}\tau^2 +\xi_{1,l}\Bigg(\sum\limits_{s=l+1}^{r_2-1} c^{(s)}  b_{s-l}^{(l)} + c^{(l)} \Big(\theta^{(l)} - \theta\Big)\Bigg)\tau\Bigg]  
\end{align*}
The term $-c^{(k+1)} \xi_{1,l}^{(1)}\tau^2$ is reduced by means of the inner biderivation of the form \eqref{eq:biderywacje_wewnetrzne_cykl}:
\begin{align*}
    \delta_{\Upsilon_{1,l}^\vee,C}^{\big( \big[V_{r_1}(\mu) \big| \boldsymbol{0} \big]\big)}= \Bigg[ \ \boldsymbol{0}\  \Big|\  \delta_{\psi_1^\vee,C}^{\big( V_{r_1}(\mu) \big)} \Bigg],
    \quad\textnormal{for}\quad \mu=-c^{(l+1)} \xi_{1,l}^{(1)}.
\end{align*}
Then 
\begin{align*}
    t*c\cdot E_2&=   
    \Bigg[\ \textnormal{ terms for }  {\psi_2^\vee}^\vee\  \Big|\ 0,\dots, 0, 
    \sum\limits_{s=l+1}^{r_2-1} b_{s-l}^{(l)}c^{(s)}  \xi_{1,l}\tau - \sum\limits_{s=1}^{r_1}a_{s} \xi_{1,l}^{(s)}  \tau \\
    &+ c^{(l)} \xi_{1,l}\Big(\theta^{(l)} - \theta\Big)\tau \Bigg]
\end{align*}
Expressing this in the chosen basis we obtain
\begin{align*}
    \Delta_{1,l}^\vee &= \xi_{1,l}\Big(\theta^{(l)} - \theta\Big)\tau^l +
 \xi_{1,l} \sum\limits_{s=l+1}^{r_2-1} b_{s-l}^{(l)} \tau^s - \sum\limits_{s=1}^{r_1}  \xi_{1,l}^{(s)} a_{s} \tau^{s+k}. 
\end{align*}
Thus
\begin{align*}
\Delta_{1,l}^\vee+ &\Bigg(\xi_{1,l}\sum\limits_{j=0}^{l}b_{r_2-l+j}^{(l)}  \tau^{r_2+j}\Bigg) = 
\xi_{1,l}\Big(\theta^{(l)} - \theta\Big)\tau^l \\
&+\xi_{1,l}\sum\limits_{s=l+1}^{r_2-1} b_{s-l}^{(l)} \tau^s +\xi_{1,l}\sum\limits_{j=0}^{l}b_{r_2-l+j}^{(l)}\tau^{r_2+j} - \sum\limits_{s=1}^{r_1}a_{s}\xi_{1,l}^{(s)}\tau^{s+l} 
   \\
   &= \xi_{1,l}\tau^l\cdot \psi_2 - \psi_1\cdot \xi_{1,l}\tau^l\in\Derin(\psi_2,\psi_1).
\end{align*}
This completes the proof of the first case.

Now let $s\in\{r_2-1,r_2,\dots, 2r_2-3 \}$. 
The determination of representatives modulo 
 $\Derin(\psi_2,\psi_1)$ for $\Delta_{1,l}$, $l=0,1,\dots, r_2-1,$ is identical to that in the previous case. Hence, it remains to establish the claim for 
  $\Delta_{1,r_2-1}$ and $\Delta_{2,l},$ $l=0,1,\dots, s-(r_2-1)$. First, consider the \tm module 
$\Upsilon_{1,r_2-1}^\vee$. Arguing as before, we obtain that
\begin{align*}
    t*c\cdot E_2&= 
    \Bigg[\textnormal{ terms for }  {\psi_2^\vee}^\vee\ \Big|\, 0,\dots, 0,
    -\sum\limits_{x_s\in \Delta_{1,r_2-1} }c^{(s)}  x_s 
     \Bigg]\\
     &= \Bigg[\textnormal{ terms for }  {\psi_2^\vee}^\vee\ \Big|\ 0,\dots, 0,
   \xi_{1,r_2-1} c^{(r_2-1)} \Big(\theta^{(r_2-1)} - \theta\Big)\tau\Bigg] 
\end{align*}
In the chosen basis, we obtain
\begin{align*}
    \Delta_{1,r_2-1}^\vee &= \xi_{1,r_2-1}\Big(\theta^{(r_2-1)} - \theta\Big)\tau^{r_2-1}.
\end{align*}
This completes the case of $\Delta_{1,r_2-1}$. Now consider the \tm module
$\Upsilon_{2,l}^\vee$ for $l\in\{0,1,\dots, s-(r_2-1)\}$. Proceeding as before, we  obtain that
\begin{align*}
    t*c\cdot E_2&= 
    \Bigg[\textnormal{ terms for }  {\psi_2^\vee}^\vee\ \Big|\, 0,\dots, 0,
    -\sum\limits_{x_s\in \Delta_{2,l} }c^{(s)}  x_s 
     \Bigg]\\
     &= \Bigg[\textnormal{ terms for }  {\psi_2^\vee}^\vee\ \Big|\ 0,\dots, 0,
    - c^{(l+1)}\xi_{2,l}^{(1)}\tau^3 +\xi_{2,l}\Bigg(\sum\limits_{s=l+1}^{r_2-1} c^{(s)}  b_{s-l}^{(l)} + c^{(l)} \Big(\theta^{(l)} - \theta\Big)\Bigg)\tau^2\Bigg]  
\end{align*}
The term $-c^{(k+1)} \xi_{2,l}^{(1)}\tau^3$ can be reduced by means of the inner biderivation of the form \eqref{eq:biderywacje_wewnetrzne_cykl}: 
\begin{align*}
    \delta_{\Upsilon_{1,l}^\vee,C}^{\big( \big[V_{r_1}(\mu)\tau \big| \boldsymbol{0} \big]\big)}&= \Bigg[ \ \boldsymbol{0}\  \Big|\  \delta_{\psi_1^\vee,C}^{\big( V_{r_1}(\mu)\tau \big)} \Bigg],
    \quad\textnormal{for}\quad \mu=-c^{(l+1)} \xi_{2,l}^{(1)},\quad \textnormal{where}\\
     \delta_{\psi^\vee,C}^{\big( V_{r_1}(\mu)\tau\big)}&=
    \Big(\theta^{(1)}-\theta\Big)V_r(\mu)\tau+
    \Bigg[ 0,\cdots, 0, - \sum\limits_{j=1}^{r_1}a_{j}^{(1)}\mu^{(j-1)}\cdot\tau^2 + \mu \tau^{3} \Bigg]
\end{align*}
Then 
\begin{align*}
    t*c\cdot E_2&= 
    \Bigg[\textnormal{ terms for }  {\psi_2^\vee}^\vee\ \Big|\, c^{(l+1)}\xi_{2,l}^{(1)}\Big(\theta^{(1)}-\theta\Big)\tau,\,  
    c^{(l+2)}\xi_{2,l}^{(2)}\Big(\theta^{(1)}-\theta\Big)\tau
    ,\dots, \\
    &\quad c^{(l+r_1-2)}\xi_{2,l}^{(r_1-2)}\Big(\theta^{(1)}-\theta\Big)\tau,\,
    c^{(l+r_1-1)}\xi_{2,l}^{(r_1-1)}\Big(\theta^{(1)}-\theta\Big)\tau\\
    &+
   \Bigg(\xi_{2,l}\sum\limits_{s=l+1}^{r_2-1} c^{(s)}  b_{s-l}^{(l)} + \xi_{2,l}c^{(l)} \Big(\theta^{(l)} - \theta\Big) - \sum\limits_{j=1}^{r_1}a_{j}^{(1)}\xi_{2,l}^{(j)} c^{(l+j)} \Bigg)\tau^2 \Bigg].
\end{align*}
For each $k=1,2,\dots r_1-2$ the term $ c^{(l+k)}\xi_{2,l}^{(k)}(\theta^{(1)}-\theta)\tau$ is reduced by means of the following inner biderivation:
\begin{align*}
    \delta_{\Upsilon_{1,l}^\vee,C}^{\big( \big[W_{k,r_1}(\mu_k)\big| \boldsymbol{0} \big]\big)}&= \Bigg[ \ \boldsymbol{0}\  \Big|\  \delta_{\psi_1^\vee,C}^{\big( W_{k,r_1}(\mu_k)\big)} \Bigg],
    \quad\textnormal{for}\quad \mu_k=c^{(l+k)} \xi_{2,l}^{(k)}(\theta^{(1)}-\theta),\quad \textnormal{where}\\
     \delta_{\psi^\vee,C}^{\big( W_{k,r_1}(\mu_k)\big)}&=
    \Bigg[ \podwzorem{0,\cdots,0}{(k-1)\textnormal{ terms}}, \mu_k\tau,0,\cdots,0, - \sum\limits_{j =k+1}^{r_1}a_{j} \mu_k^{(j-k-1)}\tau \Bigg]
\end{align*}
As a consequence, we obtain
\begin{align*}
    t*c\cdot E_2&= 
    \Bigg[\textnormal{ terms for }  {\psi_2^\vee}^\vee\ \Big|\,   0
    ,\dots, 0,\,   c^{(l+r_1-1)}\xi_{2,l}^{(r_1-1)}\Big(\theta^{(1)}-\theta\Big)\tau\\
    &+ \sum\limits_{k=1}^{r_1-2}\sum\limits_{j =k+1}^{r_1}a_{j} c^{(l+j-1)}\xi_{2,l}^{(j-1)}\Big(\theta^{(j-k)}-\theta^{(j-k-1)}\Big) \tau\\
    &+
   \Bigg(\xi_{2,l}\sum\limits_{s=l+1}^{r_2-1} c^{(s)}  b_{s-l}^{(l)} + \xi_{2,l}c^{(l)} \Big(\theta^{(l)} - \theta\Big) - \sum\limits_{j=1}^{r_1}a_{j}^{(1)}\xi_{2,l}^{(j)} c^{(l+j)} \Bigg)\tau^2 \Bigg].
\end{align*}
Note that, after collecting like terms, we obtain the identity
$$\sum\limits_{k=1}^{r_1-2}\sum\limits_{j =k+1}^{r_1}a_{j} c^{(l+j-1)}\xi_{2,l}^{(j-1)}\Big(\theta^{(j-k)}-\theta^{(j-k-1)}\Big)+ \xi_{2,l}^{(r_1-1)}\Big(\theta^{(1)}-\theta\Big) =\sum\limits_{i=2}^{r_1} a_i c^{(l+i-1)}\xi_{2,l}^{(i-1)}\Big(\theta^{(i-1)}-\theta\Big).
$$
Thus
\begin{align*}
    t*c\cdot E_2&= 
    \Bigg[\textnormal{ terms for }  {\psi_2^\vee}^\vee\ \Big|\,   0
    ,\dots, 0,\,  \sum\limits_{i=2}^{r_1} a_i c^{(l+i-1)}\xi_{2,l}^{(i-1)}\Big(\theta^{(i-1)}-\theta\Big) \tau\\
    &+
   \Bigg(\xi_{2,l}\sum\limits_{s=l+1}^{r_2-1} c^{(s)}  b_{s-l}^{(l)} + \xi_{2,l}c^{(l)} \Big(\theta^{(l)} - \theta\Big) - \sum\limits_{j=1}^{r_1}a_{j}^{(1)}\xi_{2,l}^{(j)} c^{(l+j)} \Bigg)\tau^2 \Bigg].
\end{align*}
Denote
$$A:=\Bigg(\xi_{2,l}\sum\limits_{s=l+1}^{r_2-1} c^{(s)}  b_{s-l}^{(l)} + \xi_{2,l}c^{(l)} \Big(\theta^{(l)} - \theta\Big) - \sum\limits_{j=1}^{r_1}a_{j}^{(1)}\xi_{2,l}^{(j)} c^{(l+j)} \Bigg).$$
We then reduce the term $A\tau^2$ using the following inner biderivation:
\begin{align*}
    \delta_{\Upsilon_{1,l}^\vee,C}^{\big( \big[V_{r_1}(A)\big| \boldsymbol{0} \big]\big)}&= \Bigg[ \ \boldsymbol{0}\  \Big|\  \delta_{\psi_1^\vee,C}^{\big( V_{r_1}(A)\big)} \Bigg],
    \quad\textnormal{where}\\
     \delta_{\psi^\vee,C}^{\big( V_{r_1}(A)\tau\big)}&=
    \Bigg[ 0,\cdots, 0, - \sum\limits_{i=1}^{r_1}a_{i} A^{(i-1)}\cdot\tau+ A \tau^{2} \Bigg]
\end{align*}
thus obtaining the final form
\begin{align*}
    t*c\cdot E_2&= 
    \Bigg[\textnormal{ terms for }  {\psi_2^\vee}^\vee\ \Big|\,   0
    ,\dots, 0,\, \sum\limits_{i=2}^{r_1} a_i c^{(l+i-1)}\xi_{2,l}^{(i-1)}\Big(\theta^{(i-1)}-\theta\Big) \tau \\
    &+ \sum\limits_{i=1}^{r_1} a_i \xi_{2,l}^{(i-1)}\sum\limits_{s=l+1}^{r_2-1} c^{(s+i-1)}  b_{s-l}^{(l+i-1)}\tau\\
    &+\sum\limits_{i=1}^{r_1} a_i \xi_{2,l}^{(i-1)}c^{(l+i-1)} \Big(\theta^{(l+i-1)} - \theta^{(i-1)}\Big)\tau
    -\sum\limits_{i=1}^{r_1} a_i \sum\limits_{j=1}^{r_1}a_{j}^{(i)}\xi_{2,l}^{(j+i-1)} c^{(l+j+i-1)}\tau
      \Bigg].
\end{align*}
Moreover, we have
\begin{align*}
    \sum\limits_{i=2}^{r_1} a_i c^{(l+i-1)}\xi_{2,l}^{(i-1)}\Big(\theta^{(i-1)}-\theta\Big) \tau &+ \sum\limits_{i=1}^{r_1} a_i \xi_{2,l}^{(i-1)}c^{(l+i-1)} \Big(\theta^{(l+i-1)} - \theta^{(i-1)}\Big)\tau\\
    &= \sum\limits_{i=1}^{r_1} a_i \xi_{2,l}^{(i-1)}c^{(l+i-1)} \Big(\theta^{(l+i-1)} - \theta\Big)\tau,
\end{align*}
Therefore, in the chosen basis, we obtain
\begin{align*}
    \Delta_{2,l}^\vee&= 
      \sum\limits_{i=1}^{r_1} a_i \xi_{2,l}^{(i-1)}\sum\limits_{s=l+1}^{r_2-1}   b_{s-l}^{(l+i-1)}\tau^{s+i-1}\\
    &+\sum\limits_{i=1}^{r_1} a_i \xi_{2,l}^{(i-1)} \Big(\theta^{(l+i-1)} - \theta\Big)\tau^{l+i-1}
    -\sum\limits_{i=1}^{r_1} a_i \sum\limits_{j=1}^{r_1}a_{j}^{(i)}\xi_{2,l}^{(j+i-1)} \tau^{l+j+i-1}.
\end{align*}
Let us set $U:=\sum\limits_{i=1}^{r_1} a_i \xi_{2,l}^{(i-1)} \tau^{l+i-1}$ and
$V:=\xi_{2,l}^{(-1)}\sum\limits_{j=r_2-l}^{r_2}   b_{j}^{(l-1)}\tau^{j+l-1}.$ 

Then 
\begin{align*}
    \Delta_{2,l}^\vee&= U\psi_2-\psi_1 U-
      \sum\limits_{i=1}^{r_1} a_i \tau^{i} V. 
\end{align*}
Finally we obtain
\begin{align*}
    \Delta_{2,l}^\vee+ \xi_{2,l}^{(-1)}\cdot \sum\limits_{j= r_2-l}^{r_2} \delta_{\psi_2,\theta}^{\big(b_j^{(l-1)}\tau^{j+l-1}\big)}&= 
     \Delta_{2,l}^\vee+ \delta_{\psi_2,\theta}^{(V)}=U\psi_2-\psi_1 U- \sum\limits_{i=1}^{r_1} a_i \tau^{i} V + V\psi_2-\theta V\\
     &=     U\psi_2-\psi_1 U+V \psi_2- \psi_1 V\in\Derin(\psi_2,\psi_1), 
\end{align*}
thereby completing the proof of $(ii)$. 

\end{proof}

We note that the specific forms of the polynomials $U$ and $V$ are not unexpected, since they are suggested by empirical computations, several examples of which are listed in Tables \ref{tabela:reprezentaci12} and  \ref{tabela:reprezentaci23}.

\begin{remark}
Our approach to establishing the isomorphism
${\Upsilon^\vee}^\vee \cong \Upsilon$
provides an explanation for the regularities observed in Section~\ref{poly}. Indeed, observe that   equality~\eqref{eq:reprezentanty_Deltavee}, together with the formula
$$
{\delta^\vee}^\vee=\sum_{k,l}\Delta_{k,l}^\vee,
$$
implies that
 \begin{align*}
     {\delta^\vee}^\vee&= \sum\limits_{k,l}\delta_{k,l}^{\mathrm{red}}+\sum\limits_{k,l}\delta_{\psi_2,\psi_1}^{\big(u_{k,l}(\tau)\big)}.
 \end{align*}
 If we consider the \tm module determined by the triple of integers $(r_1,r_2,h)$, then, according to the notation introduced in Section~\ref{poly}, we have
$$u(r_1,r_2,h)=\sum_{k,l}u_{k,l}(\tau).$$
From the description of the dual \tm module given in Theorem~\ref{thm:postac_duala}, it follows that for $h\leq 2(r_2-1)$ only components of the form $\Delta_{1,l}$ with $l<r_2-1$ occur. Therefore, by the proof of Lemma~\ref{lem:straszny}, we obtain
$$
u(r_1,r_2,h)=\sum_{l}u_{1,l}(\tau)
           =\sum_{l}\xi_{1,l}\tau^l.$$
Hence, in this case, $u(r_1,r_2,h)$ is independent of $r_1$ and $r_2$, which explains observation~\textbf{a.} from Section~\ref{poly}.

To justify observation~\textbf{b.}, let us note that in the case
$2(r_2-1)<h\leq 3(r_2-1),$
the dual \tm module contains only components of the form $\Delta_{1,l}$ for
$l=0,1,\dots,r_2-1$ and $\Delta_{2,l}$ for $l<r_2-1$. Therefore, it follows from the proof of Lemma~\ref{lem:straszny} that
$$u(r_1,r_2,h) =
\sum_{l}\xi_{1,l}\tau^l
+
\sum_{l}
\left(
\sum_{i=1}^{r_1}
a_i \xi_{2,l}^{(i-1)} \tau^{l+i-1}
+
\xi_{2,l}^{(-1)}
\sum_{j=r_2-l}^{r_2}
b_{j}^{(l-1)}\tau^{j+l-1}
\right).
$$
Since the sum
$\sum_{l}\sum_{i=1}^{r_1}a_i \xi_{2,l}^{(i-1)} \tau^{l+i-1}$
depends on $r_1$ and on the coefficients of $\psi_1$, it gives rise to the recurrence relations observed in observation~\textbf{b.} of Section~\ref{poly}.

\end{remark}

\section{Discussion and future directions}\label{disc}
Our main theorem on Cartier--Nishi duality for two-dimensional triangular \tm modules allowing for $\mathrm{ALD}$-biderivations extends the class of modules for which the duality is known to hold. Recall that in \cite{kk25} we proved the Cartier--Nishi theorem for triangular \tm modules of arbitrary dimension, but under a more restrictive degree condition, namely for those allowing for $\mathrm{LD}$-biderivations. The latter case appears to be more tractable, since the relevant isomorphisms are induced by constant polynomials. Both proofs rely crucially on the existence of a reduced form, established in \cite[Theorem 4.1]{kk25}. 
If one attempts to work with non-reduced forms of \tm modules, the formulas appearing in Lemma~\ref{lem:straszny} become so complicated as to be practically intractable. For \tm modules that do not allow for $\mathrm{ALD}$-biderivations, the computational complexity increases substantially: one obtains highly irregular formulas and, moreover, must work over a finite extension of the field of definition of the \tm module.

Despite these obstacles, we believe that Cartier--Nishi duality should hold in a much broader setting, possibly for all dimensions. This belief motivates the general conjectures stated above. However, establishing such a result will likely require a modification of approach  developed in the present paper.

The following example is instructive.

\begin{example}
Consider a triangular \tm module
$\Upsilon=\Upsilon(\psi_1,\psi_2,\delta),$
where
$$\psi_1=\theta+\sum_{i=1}^7 a_i\tau^i+\tau^8,\qquad
\psi_2=\theta+b_1\tau+b_2\tau^2+\tau^3,
\qquad\text{and}\qquad
\delta=d\tau^7.$$

This module does not allow for $\mathrm{ALD}$-biderivations. Then, by Theorem~\ref{thm:postac_duala},
$$
\delta^\vee=[0,0,0,0,0,1]\otimes
\Big(
\Delta_{1,0}
+\Delta_{1,1}
+\Delta_{1,2}
+\Delta_{2,0}
+\Delta_{2,1}
+\Delta_{2,2}
+\Delta_{3,0}
\Big).
$$

By computing the terms $\Delta_{k,l}^\vee$ using the \tm reduction algorithm, we make the following observations:
\begin{itemize}
    \item[\textbf{a.}] The representatives of the biderivations $\Delta_{1,l}^{\vee}$ for $l\in{0,1,2}$ and $\Delta_{2,l}^{\vee}$ for $l\in{0,1}$ coincide with those obtained in Lemma~\ref{lem:straszny}.

\item[\textbf{b.}] If we attempt to determine representatives of $\Delta_{2,2}^{\vee}$ and $\Delta_{3,0}^{\vee}$ in the same manner, it turns out that:
\begin{itemize}
\item We are forced to extract $q$-th roots of elements that need not be $q$-th powers in the base field (K), for example $\theta^{(-1)}$. This necessitates passing to a finite extension of $K$. It is worth noting that the direct computation of $u(8,3,7)$ did not require such a field extension.

\item With representatives determined in this manner, it becomes difficult to carry out a proof analogous to that of Theorem~\ref{thm:main}.
\end{itemize}
\end{itemize}
This suggests that the decomposition into $\Delta_{2,2}^{\vee}$ and $\Delta_{3,0}^{\vee}$ may not be the most natural one. In the present case, this issue can be resolved by considering $\Delta_{2,2}^{\vee}$ and $\Delta_{3,0}^{\vee}$ jointly rather than separately, that is, by computing
$\bigl(\Delta_{2,2}+\Delta_{3,0}\bigr)^{\vee}.$
One then obtains the following relation:
\begin{align*}
       \Big(\Delta_{2,2}+\Delta_{3,0}\Big)^\vee \equiv \xi_{2,2}^{(-1)}\cdot \sum\limits_{j=1}^3\delta_{\psi_2,\theta }^{\big(b_j^{(1)}\tau^{j+1}\big) } \mod \Derin(\psi_2,\psi_1),
    \end{align*}
    which corresponds to the representative obtained in Lemma~\ref{lem:straszny}. Consequently, we may repeat the argument used in the proof of Theorem~\ref{thm:main} to obtain an isomorphism ${\Upsilon^\vee}^\vee \cong \Upsilon.$
 \end{example}  

Motivated by this example, and by repeating the computations from the proof of Lemma~\ref{lem:straszny} together with the argument used in the proof of Theorem~\ref{thm:main}, one might hope to extend the Cartier--Nishi conjecture to the case $(r_1,r_2,h)$ with
$h = 3(r_2-1)+1.$

 This example raises the hope that the method proposed in this paper may also be adapted to the general case. However, for triples $(r_1,r_2,h)$ with $h>3(r_2-1)+1,$
that is, when terms of the form $\Delta_{3,l}$ with $l>0$ appear in the description of the dual \tm module, the same line of reasoning no longer works.

As before, the biderivations $\Delta_{1,l}$ and $\Delta_{2,l}$ for $l<r_2-1$ admit the same representatives as those appearing in Lemma~\ref{lem:straszny}. However, considering $\Delta_{2,r_2-1}$, either separately or together with $\Delta_{3,0}$ and the remaining terms $\Delta_{3,l}$, no longer yields the desired outcome.

\end{document}